\documentclass{article}
\usepackage{graphicx}
\usepackage{amsthm, amsmath, amssymb, tikz, bm}
\usepackage{mathtools}
\usepackage{xifthen}
\usepackage[normalem]{ulem}

\usepackage[margin=1.2in]{geometry} 

\usepackage[shortlabels]{enumitem}
\usepackage{todonotes}
\usepackage[colorlinks=true,
linkcolor=blue,citecolor=blue,
urlcolor=blue]{hyperref}

\parindent=0pt
\parskip=3pt
\allowdisplaybreaks

\usetikzlibrary{calc,shapes, backgrounds}

\usepackage{amssymb, amsmath, amsthm, graphicx,mathrsfs}
\usepackage{xcolor}

\usepackage{graphicx} 

\newtheorem{thm}{Theorem}[section]
\newtheorem{cor}[thm]{Corollary}

\newtheorem{lem}[thm]{Lemma}

\newtheorem{quest}[thm]{Question}
\newtheorem{definition}[thm]{Definition}

\newtheorem{claim}[thm]{Claim}

\title{Ore's Theorem for rainbow Hamiltonian-connected graphs}
\date{\today}
\author{
{{Yupei Li}}\thanks{
\footnotesize {University of South Carolina, Columbia, SC 29208, USA. Email: {\tt yupei@email.sc.edu}.}}
\and{{Ruth Luo}}\thanks{University of South Carolina, Columbia, SC 29208, USA. Email: {\tt ruthluo@sc.edu}. Research of this author
is supported in part by NSF grant DMS-2452134.
}}
\begin{document}

\maketitle

\begin{abstract}Let $G = (G_1, G_2, \ldots, G_m)$ be a collection of $m$ graphs on a common vertex set $V$. For a graph $H$ with vertices in $V$, we say that $G$ contains a rainbow $H$ if there is an injection $c: E(H) \to [m]$ such that for every edge $e \in E(H)$, we have $e \in E(G_{c(e)})$.

In this paper, we show that if $G = (G_1, \ldots, G_n)$ is a collection of graphs on $n$ vertices such that for every $i \in [n]$, $d_{G_i}(u) + d_{G_i}(v) \geq n$ whenever $uv \notin E(G_i)$, then either $G$ contains rainbow Hamiltonian paths between every pair of vertices, or $G$ contains a rainbow Hamiltonian cycle. Moreover, we prove a stronger version in which we may also embed prescribed rainbow linear forests into the Hamiltonian paths. 
\end{abstract}

\section{Introduction}

Suppose $G = (G_1, \ldots, G_m)$ is a collection of $m$ (not necessarily distinct) graphs with the same vertex set $V=V(G)$. For any graph $H$ with $V(H) \subseteq V(G)$, we say $G$ contains a {\bf rainbow $H$} if there exists an injection $c: E(H) \to [m]$ such that for every edge $e \in E(H)$, $e$ also belongs to $E(G_{c(e)})$. In other words, we can view the $m$ graphs $G_1, \ldots, G_m$ as corresponding to distinct colors $1, \ldots, m$, respectively. For each edge $e \in E(H)$, we have the option of assigning to it any color $i$ in which $e \in E(G_i)$. Then $G$ contains a rainbow $H$ if there exists a coloring of $E(H)$ in which each color is used at most once. 

In the case that $m = |E(H)|$, i.e., $c$ is a bijection, we say that $H$ is a {\bf $G$-transversal} if $G$ contains a rainbow $H$. This notion of $G$-transversals and rainbow subgraphs was first formalized by Joos and Kim in~\cite{JK}.

The famous Dirac's Theorem states that any graph $G$ on $n \geq 3$ vertices with minimum degree $\delta(G) \geq n/2$ contains a Hamiltonian cycle, and moreover this bound is best possible (for instance consider the complete bipartite graph $K_{\lfloor (n-1)/2 \rfloor, \lceil (n+1)/2\rceil}$). We refer to results in which some lower bound on the minimum degree $\delta(G)$ implies the existence of a certain substructure as {\em Dirac-type} results.

Joos and Kim~\cite{JK} proved the following rainbow version of Dirac's Theorem.

\begin{thm}[Joos, Kim~\cite{JK}]\label{JKthm} Suppose $G = (G_1, \ldots, G_n)$ is a collection of $n$ graphs with the same vertex set $V$ where $|V|=n\geq 3$ and $\delta(G_i) \geq n/2$ for all $i\in[n]$. Then $G$ contains a rainbow Hamiltonian cycle.     
\end{thm}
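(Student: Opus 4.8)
The plan is to adapt the absorption method to the rainbow (transversal) setting, where the genuinely new difficulty is that each edge we use consumes a distinct color, so colors must be budgeted as carefully as vertices. Write $L(e) = \{\, i \in [n] : e \in E(G_i)\,\}$ for the list of colors available to a potential edge $e$. A convenient reformulation is that a Hamilton cycle $H$ of the union graph $\bigcup_i G_i$ underlies a rainbow Hamilton cycle precisely when the lists $\{\,L(e) : e \in E(H)\,\}$ admit a system of distinct representatives, i.e.\ (by Hall's theorem) when $\bigl|\bigcup_{e \in F} L(e)\bigr| \ge |F|$ for every $F \subseteq E(H)$. Rather than verify Hall's condition at the end, I would build the cycle together with an explicit injective coloring and maintain the rainbow property as an invariant throughout. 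Note also that each $G_i$ has $\delta(G_i)\ge n/2$, so every color class is itself Hamiltonian by Dirac's theorem; these dense neighbourhoods are the only input the degree hypothesis provides, and every step must be driven by them.

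The first main step is to construct a short \emph{absorbing} rainbow path $A$ together with a reserved palette $R \subseteq [n]$ of colors (and the vertices of $A$) set aside exclusively for absorption. The basic gadget for a vertex $v$ is a reserved edge $xy$ of $A$, colored $\alpha \in R$, for which $v$ is joined to $x$ and to $y$ in two further reserved colors $\beta,\gamma \in R$; then replacing $xy$ by the path $x\,v\,y$ re-inserts $v$ while keeping the path rainbow and fixing its endpoints. I would show via the degree condition that every vertex $v$ has many such gadgets --- $v$ has at least $n/2$ neighbours in each reserved color --- and then use a probabilistic selection (a random reserved set of colors and vertices) to assemble an absorbing path that can absorb \emph{any} sufficiently small leftover set $S$, in any order, using only colors of $R$.

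The second step is to cover almost all of the remaining vertices by a long rainbow path using only the non-reserved colors. Here I would either iterate a greedy/rotation argument --- at a path endpoint $u$, some non-reserved color gives $u$ a neighbour outside the current path, allowing extension, and P\'osa-type rotations together with the degree condition close the path into a long rainbow cycle --- or use a rainbow connecting lemma to stitch together rainbow fragments. The bound $\delta(G_i)\ge n/2$ is exactly what guarantees, at each stage, an available edge in a fresh color. Finally, I would join this near-spanning rainbow path to the absorbing path and absorb the (few) uncovered vertices into $A$, closing everything into a single rainbow Hamilton cycle.

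The step I expect to be the main obstacle is the color bookkeeping that runs through all three phases: unlike in the classical proof, using an edge removes a color from future availability, so I must guarantee that the absorbing palette $R$ and the covering palette $[n]\setminus R$ stay disjoint and large enough, and that the degree hypothesis still produces edges in \emph{available} colors after many have been consumed --- this is where the Hall/SDR viewpoint and a careful count of how many colors each extension or absorption spends become essential. As is typical for Dirac-type arguments, I also anticipate having to treat separately the near-extremal configurations (all $G_i$ close to a common complete bipartite graph $K_{\lfloor n/2\rfloor, \lceil n/2\rceil}$), where absorption has little room and a direct, structure-based argument is needed instead.
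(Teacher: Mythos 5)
This statement is not proved in the paper at all: it is Joos and Kim's theorem, quoted from~\cite{JK} as background, so there is no in-paper proof to compare against. Judged on its own terms, your proposal is a research plan rather than a proof, and it has a concrete gap at its core. Your absorbing gadget for a vertex $v$ is a reserved edge $xy$ on the absorbing path such that $vx$ and $vy$ exist in further reserved colors, so that $xy$ can be replaced by $x\,v\,y$. At the exact threshold $\delta(G_i)\ge n/2$ this gadget can fail to exist for \emph{every} vertex: if each $G_i$ is the balanced complete bipartite graph $K_{n/2,n/2}$ with parts $A,B$ and $v\in A$, then $x$ and $y$ must both lie in $B$, whence $xy$ is a non-edge in every color. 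This is not a pathological counterexample one can discard --- it is a legitimate instance of the theorem (a rainbow Hamiltonian cycle does exist there) --- so the entire absorption phase collapses exactly where the hypothesis is tight. You flag ``near-extremal configurations'' as needing separate treatment, but that separate treatment, together with a stability analysis showing that every non-extremal instance has enough room for absorption, would constitute the bulk of the proof and is not supplied. The other load-bearing steps (rainbow P\'osa rotations, whose color bookkeeping changes under every rotation; the probabilistic choice of a reserved palette that works simultaneously for all leftover sets) are likewise asserted rather than argued.

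It is also worth noting that the actual Joos--Kim proof takes a much more elementary route: rather than absorption, they argue directly about a rainbow path/cycle that is extremal in an appropriate sense, extending it step by step while maintaining a small reserve of unused colors, and close a rainbow Hamiltonian path into a cycle with the classical Ore-style crossing argument performed in two leftover colors. That style of argument --- maximal rainbow path, careful count of spare colors, neighborhood intersection on the path --- is precisely the template the present paper uses in its Case~1 claims, so if you want an approach aligned with this literature, the rotation/crossing argument with explicit color budgeting is the one to develop, not absorption.
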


Again, the condition $\delta(G_i) \geq n/2$ is best possible because one could take $G$ to have $n$ identical copies of an extremal graph for Dirac's Theorem that is not Hamiltonian.

Joos and Kim also proved a sharp rainbow Dirac-type Theorem for perfect matching transversals, and Cheng, Sun, Wang, Wei ~\cite{CSWW} for Hamiltonian path transversals. Later, Sun, Wang, Wei ~\cite{SWW} proved Dirac-type theorems for tranversal Hamiltonian-connectedness and panconnectedness. We say a graph $G$ is {\bf Hamiltonian-connected} if between every pair of vertices $u,v \in V(G)$ there exists a $u,v$-Hamiltonian path, and is {\em panconnected} if between every pair of vertices $u,v \in V(G)$ there exists $u,v$-paths on $k$ vertices for all $k$ between their distance $d_G(u,v)$ and $|V(G)|$.  Bradshaw~\cite{B} proved Dirac-type Theorems for rainbow bipancyclicity in bipartite graphs, extending a Theorem by Schmeichel and Mitchem~\cite{SM}.
See~\cite{SWW2} for a survey on many more related rainbow subgraph and transversal problems.

A well known strengthening of Dirac's Theorem is due to Ore. We define the function 
\[\sigma_2(G) = \min_{uv \notin E(G)} d_G(u) + d_G(v).\]
\begin{thm}[Ore~\cite{Ore}]Let $G$ be a graph on $n \geq 3$ vertices such that $\sigma_2(G)\geq n$. Then $G$ contains a Hamiltonian cycle. 
\end{thm}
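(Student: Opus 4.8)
The plan is to argue by contradiction, using a standard edge-maximality reduction followed by a rotation/crossing argument on a Hamiltonian path. First I would suppose that $G$ is a graph on $n \geq 3$ vertices with $\sigma_2(G) \geq n$ that has no Hamiltonian cycle, and I would pass to an edge-maximal counterexample: among all non-Hamiltonian graphs on the same vertex set containing $G$ as a subgraph, take $G$ itself (after adding edges) to be one to which no further edge can be added without creating a Hamiltonian cycle. Adding edges only increases degrees, so the hypothesis $\sigma_2 \geq n$ is preserved, and $G$ remains non-Hamiltonian by assumption. Since $K_n$ is Hamiltonian for $n \geq 3$, this maximal $G$ is not complete, so there exist two nonadjacent vertices $u$ and $v$.

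Next I would exploit maximality: because $G + uv$ is Hamiltonian but $G$ is not, every Hamiltonian cycle of $G + uv$ must use the new edge $uv$, and deleting that edge leaves a Hamiltonian path of $G$ from $u$ to $v$. Write this path as $v_1 v_2 \cdots v_n$ with $v_1 = u$ and $v_n = v$. The key observation is that if there is an index $i$ with $v_1 v_{i+1} \in E(G)$ and $v_i v_n \in E(G)$, then
\[
v_1 v_2 \cdots v_i\, v_n v_{n-1} \cdots v_{i+1}\, v_1
\]
is a Hamiltonian cycle of $G$, contradicting our assumption. So it suffices to produce such a crossing index $i$.

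To find $i$, I would define the index sets $S = \{\, i : v_1 v_{i+1} \in E(G) \,\}$ and $T = \{\, i : v_i v_n \in E(G) \,\}$, both contained in $\{1, \ldots, n-1\}$. Counting neighbors along the path gives $|S| = d_G(v_1) = d_G(u)$ and $|T| = d_G(v_n) = d_G(v)$. Since $uv \notin E(G)$, the hypothesis yields $|S| + |T| = d_G(u) + d_G(v) \geq n$, while $|S \cup T| \leq n - 1$; hence $|S \cap T| \geq |S| + |T| - |S \cup T| \geq n - (n-1) = 1$, so a common index exists and produces the forbidden Hamiltonian cycle.

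The conceptual heart of the argument is the crossing step in the second paragraph — recognizing that a single pair of ``opposite'' edges reroutes the Hamiltonian path into a cycle — together with the pigeonhole count that forces such a pair to exist; the maximality reduction and the degree bookkeeping are then routine. I expect the only point requiring care is the boundary bookkeeping of the index sets (noting that $uv \notin E(G)$ keeps $S$ and $T$ inside $\{1,\ldots,n-1\}$ so that the $n-1$ upper bound on $|S \cup T|$ is valid), which is what makes the strict inequality $|S|+|T| > |S \cup T|$ go through.
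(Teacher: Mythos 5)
Your proof is correct and is the classical argument for Ore's theorem: pass to an edge-maximal non-Hamiltonian supergraph, extract a Hamiltonian $u,v$-path from a non-adjacent pair, and force a crossing pair of edges by the pigeonhole count $|S|+|T|\geq n > n-1 \geq |S\cup T|$. The paper states this result only as a citation to Ore~\cite{Ore} and gives no proof, so there is nothing to compare against; your argument is the standard one and is complete. One small correction to your closing remark: the containments $S,T\subseteq\{1,\ldots,n-1\}$ hold automatically from the definitions (neighbors of $v_1$ lie among $v_2,\ldots,v_n$ and neighbors of $v_n$ among $v_1,\ldots,v_{n-1}$), independently of whether $uv\in E(G)$; the non-adjacency of $u$ and $v$ is needed instead to license applying the hypothesis $\sigma_2(G)\geq n$ to that particular pair.
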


Towards an Ore-type Theorem for transversals, Li, Li, Li proved the following result for rainbow Hamiltonian paths.

\begin{thm}[Li, Li, Li~\cite{LLL}]\label{Li} Suppose $G = (G_1, \ldots, G_n)$ is a collection of $n$ graphs with the same vertex set $V$ where $|V|=n$ and $\sigma_2(G_i) \geq n-2$ for all $i \in [n]$. Then one of the following statements holds:
\begin{enumerate}
    \item[$(A1)$] $G$ has a rainbow Hamiltonian path;
    \item[$(A2)$] $G$ consists of $n$ identical copies of $K_\ell \cup K_{n-\ell}$ where $\ell \in [n-1]$;
    \item[$(A3)$] $n$ is even and there is a partition $V = X \cup Y$ with $|X| = n/2-1, |Y| = n/2+1,$ and for every $i \in [n]$, $G_i[Y]$ is an independent set.
\end{enumerate}
\end{thm}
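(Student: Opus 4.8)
The plan is to argue by contradiction via a rainbow rotation--extension (Pósa-type) argument, spending the flexibility that comes from having several unused colors. Suppose none of (A1)--(A3) holds. Among all rainbow paths in $G$, fix one $P = v_1 v_2 \cdots v_k$ of maximum length, together with an injective coloring $c : E(P) \to [n]$; since (A1) fails we have $k \le n-1$, so the set $R = V \setminus V(P)$ is nonempty and the set $U \subseteq [n]$ of colors missing from $c$ satisfies $|U| = n - (k-1) \ge 2$. The starting observation is non-extendability: if some endpoint, say $v_1$, satisfied $v_1 w \in E(G_j)$ for some $w \in R$ and some free color $j \in U$, then $w v_1 v_2 \cdots v_k$ would be a strictly longer rainbow path. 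Hence for every $j \in U$ both $N_{G_j}(v_1)$ and $N_{G_j}(v_k)$ lie inside $V(P)$.

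Next I would set up rainbow rotations. If for some $2 \le i \le k$ there is a free color $j \in U$ with $v_1 v_i \in E(G_j)$, then deleting the path-edge $v_{i-1}v_i$ and adding $v_1 v_i$ (colored $j$) produces another maximum rainbow path on the same vertex set, with fixed endpoint $v_k$ and new endpoint $v_{i-1}$; the colors stay distinct because $j$ was free while exactly one old color was released, and the number of free colors is preserved at $|U| \ge 2$. Thus rotations can be iterated exactly as in the classical setting. Fixing the end $v_k$ and iterating, I obtain a set $S$ of attainable opposite-endpoints, each realized by a maximum rainbow path on $V(P)$. The aim is to find, among these rotated paths, two ``crossing'' edges in two distinct free colors that close a rainbow cycle spanning $V(P)$: concretely an index $i$ and colors $a \ne b$ in $U$ with $v_k v_i \in E(G_a)$ and $v_1 v_{i+1} \in E(G_b)$, giving the rainbow cycle $v_1 v_2 \cdots v_i v_k v_{k-1}\cdots v_{i+1} v_1$ (the released edge $v_i v_{i+1}$ keeps the coloring injective).

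Once such a spanning rainbow cycle $C$ on $V(P)$ exists, the extension step is immediate: since $R \ne \emptyset$, if any $w \in R$ had a free-color edge to some $v_t \in V(C)$, then cutting $C$ at $v_t$ and appending $w$ would yield a rainbow path longer than $P$, a contradiction. Therefore $R$ is entirely non-adjacent to $V(P)$ in every free color. Feeding this total separation back into the hypothesis $\sigma_2(G_i) \ge n-2$ and carrying out the extremal analysis is what produces the two surviving configurations: a uniform disconnection into $K_\ell \cup K_{n-\ell}$ yields (A2), while the appearance of a common independent set forced to have size $n/2 + 1$ yields (A3).

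I expect two steps to be the real difficulty. The first is manufacturing the spanning rainbow cycle: in the one-graph Ore proof the two closing edges live in a single graph, but here a rainbow cycle needs its two new edges in \emph{distinct} free colors, so the clean single-color crossing count does not transfer, and one must combine rotations across different colors of $U$ and argue that two compatible colors always exist --- this is exactly where the slack $\sigma_2 \ge n-2$ (rather than $n-1$) together with $|U| \ge 2$ should be spent. The second difficulty is the closing extremal analysis: the separation and degree conditions hold within each $G_i$ \emph{separately}, possibly with different offending vertices in different colors, so assembling these per-color statements into a single global structure valid simultaneously for all $n$ colors --- and pinning the sizes to $\ell, n-\ell$ or to $n/2 \pm 1$ --- is the delicate bookkeeping at the heart of the theorem.
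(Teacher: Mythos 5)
First, a point of context: the paper does not prove this statement at all --- it is Theorem~\ref{Li}, quoted from Li, Li, Li~\cite{LLL} and used as a black box (the only thing proved about it here is the easy extension in Lemma~\ref{Li2}). So there is no in-paper proof to compare against; I can only assess your argument on its own terms.

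On those terms, what you have is a strategy outline rather than a proof, and the gaps you yourself flag at the end are genuine and not minor. Two of them are fatal as written. (i) The rotation step does not iterate ``exactly as in the classical setting'': after each rotation the set $U$ of free colors changes (one path-color is released, one free color is consumed), so the edges available for the next rotation live in a different family of graphs than before. The endpoint set $S$ you build is therefore not well defined relative to a fixed $U$, and the single-graph crossing count that closes the cycle in Ore's proof has no direct analogue --- you need the two closing edges $v_kv_i$ and $v_1v_{i+1}$ in \emph{distinct} free colors, and the standard pairing trick (from $d_{a_1}(x)+d_{a_1}(y)\geq n-2$ and $d_{a_2}(x)+d_{a_2}(y)\geq n-2$ deduce $d_{a_2}(x)+d_{a_1}(y)\geq n-2$ for some assignment, then count common indices) must be carried out; you name this difficulty but do not resolve it. (ii) Even granting a spanning rainbow cycle on $V(P)$, your separation conclusion only concerns the colors that remain free \emph{after} building that cycle (at most $|U|-1$ of them), whereas the conclusions $(A2)$ and $(A3)$ are statements about \emph{all} $n$ graphs $G_1,\ldots,G_n$ simultaneously --- all $n$ copies identical in $(A2)$, $G_i[Y]$ independent for every $i$ in $(A3)$. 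Bridging from ``a few free colors see a separation'' to ``every color has the same rigid structure'' is the actual content of the theorem, and nothing in the proposal supplies that bridge. The high-level plan (maximum rainbow path, non-extendability, rotation, extremal analysis) is the natural and presumably correct skeleton, but as submitted the proof is incomplete.
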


In~\cite{B} and~\cite{LLL}, Bradshaw and Li, Li, Li asked if an Ore's Theorem for rainbow Hamiltonian cycles could be proved. This question is formalized below.

\begin{quest}\label{rainbowham}Suppose $G = (G_1, \ldots, G_n)$ is a collection of $n$ graphs with the same vertex set $V$ where $|V|=n$ and $\sigma_2(G_i) \geq n$ for all $i \in [n]$. Is it true that $G$ has a rainbow Hamiltonian cycle?
\end{quest}
In this paper, we prove a weaker version for Hamiltonian-connectedness.

\subsection{New results}
We say that $G = (G_1, \ldots, G_m)$ is {\bf rainbow Hamiltonian-connected} if for every pair $u,v \in V(G)$ there exists a rainbow Hamiltonian path from $u$ to $v$. Our main result is the following extension of Theorem~\ref{Li}.

 \begin{thm}\label{mainthm}
Suppose $G = (G_1, \ldots, G_n)$ is a collection of $n$ graphs with the same vertex set $V$ and $\sigma_2(G_i) \geq n$ for all $i \in [n]$. Then for every pair of vertices $u,v \in V$, one of the following statements holds:
\begin{enumerate}
    \item[$(B1)$] $G$ contains a rainbow Hamiltonian $u,v$-path;
    \item[$(B2)$] 
    There exists a partition $V\setminus \{x,y\} = X \cup Y$ with $|X|,|Y| \geq 1$ such that for every $i \in [n]$, in $G_i$, $G_i[X]$ and $G_i[Y]$ are disjoint cliques, and $u$ and $v$ are adjacent to all of $X \cup Y$.
    \item[$(B3)$] $n$ is even and there exists a partition $V = X \cup Y$ with $|X| = n/2, |Y| = n/2,$ $u,v \in X$, such that for every $i \in [n]$, $G_i[X,Y]$ is complete bipartite.
\end{enumerate} 
\end{thm}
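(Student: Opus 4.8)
The plan is to fix the pair $u,v$, assume that (B1) fails (there is no rainbow Hamiltonian $u,v$-path), and deduce the extremal structure (B2) or (B3). To make an induction go through I would prove the stronger linear-forest statement advertised in the abstract: given a prescribed rainbow linear forest $F$ (a vertex-disjoint union of paths, colored with distinct colors, having $u,v$ among its endpoints), either there is a rainbow Hamiltonian $u,v$-path containing $F$ with its prescribed colors, or one of the extremal structures holds. One inducts on $n-(\text{number of edges of }F)$, the base case being when $F$ is already a spanning $u,v$-path and the main theorem being the case $F=\{u,v\}$ (two isolated vertices). Throughout, I track a rainbow $u,v$-path $P$ extending $F$ of maximum order; since $P$ is non-Hamiltonian it has $|V(P)|-1\le n-2$ edges, hence leaves at least two colors unused, and this two-color slack is what drives every local move.

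The engine is two endpoint-preserving operations on $P=p_0p_1\cdots p_k$ (with $p_0=u$, $p_k=v$). The first is \emph{insertion}: for an uncovered vertex $w$ and an edge $p_ip_{i+1}\notin F$, if $w$ is joined to $p_i$ and to $p_{i+1}$ by two \emph{distinct} available colors (the two free colors together with the color freed by deleting $p_ip_{i+1}$ give room), then replacing $p_ip_{i+1}$ by $p_iw,wp_{i+1}$ produces a longer rainbow $u,v$-path, contradicting maximality. The second is \emph{rotation} at a fixed end together with a global recoloring of the path edges, used to enlarge the set of reachable configurations and, when two rotated structures cross, to build a rainbow cycle on $V(P)$ or to absorb an uncovered vertex. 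Non-applicability of insertion at every edge, for every pair of colors, forces a Hall-type deficiency: the available colors by which $w$ attaches to $p_i$ and those by which it attaches to $p_{i+1}$ can never be realized by two distinct colors. Combining this deficiency over all $n$ colors and over the rotated path families pins down, for each $G_i$ separately, exactly where $w$ may and may not attach.

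With the attachment pattern determined I would run the classical Ore counting inside each $G_i$ individually. Because $\sigma_2(G_i)\ge n$ is tight, the only ways the insertion/rotation machinery can be globally blocked are the two Ore-extremal families: either $\{u,v\}$ is a cut pair whose deletion splits the remaining vertices into sets $X,Y$ with no $G_i$-edge between them while $u,v$ are complete to $X\cup Y$, giving (B2) after checking that each side is a clique in every $G_i$; or there is a balanced bipartition $V=X\cup Y$ with $u,v\in X$ that is complete bipartite in every $G_i$, the parity obstruction giving (B3). The delicate point here is to promote each blocking phenomenon, which a priori involves only the single color supplying the obstruction, to a statement holding simultaneously for all $n$ graphs.

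The hardest part, and where most of the work concentrates, is exactly this transversal coupling between the global color budget and the per-graph degree conditions: a single uncovered vertex and a single path produce constraints that mix colors across all $G_i$, whereas (B2) and (B3) are assertions about each $G_i$ on its own. Bridging the two requires choosing carefully which colors to free during rotations and showing that exhausting the two-color slack already forces near-extremal degrees in many graphs at once. The fixed-endpoint constraint compounds the difficulty: ordinary rotation moves an endpoint away from $v$, so one must either insert (which preserves $u,v$ but needs two compatible available colors) or fuse two rotations into a crossing that yields a cycle on $V(P)$, and then convert that cycle back into a $u,v$-path. This conversion is only free when $u$ and $v$ are adjacent on the cycle, so the non-adjacent case is the technical heart and is precisely what prevents the argument from answering Question~\ref{rainbowham} outright; I expect the final dichotomy between (B2) and (B3) to emerge from a concluding Hall-type counting once the attachment structure is fully pinned down.
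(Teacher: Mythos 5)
Your proposal is a plan rather than a proof, and the step you defer is precisely the content of the theorem. You correctly identify the engine (maximal rainbow $u,v$-path, insertion using the two-color slack, rotations with recoloring) and you correctly identify the hard part (promoting a blocking phenomenon, which a priori lives in one or two colors, to the statements (B2)/(B3) that must hold \emph{simultaneously in all $n$ graphs}). But you then write that you ``expect the final dichotomy between (B2) and (B3) to emerge from a concluding Hall-type counting once the attachment structure is fully pinned down'' --- that counting is never carried out, the attachment structure is never pinned down, and the fixed-endpoint rotation problem (converting a rainbow cycle on $V(P)$ back into a $u,v$-path when $u$ and $v$ are not adjacent on it) is flagged as the technical heart and then left open. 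The induction on $n-|E(F)|$ is also not set up: the inductive step is never described, and it is unclear what smaller instance you would invoke the hypothesis on. As written, the argument establishes only that a maximal non-Hamiltonian rainbow $u,v$-path yields some insertion/rotation obstruction; it does not establish that the obstruction forces the clique structure of (B2) or the balanced complete-bipartite structure of (B3).

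For contrast, the paper avoids the entire transversal-coupling difficulty by a reduction: delete $u$ and $v$ (more generally a set $D$ of $k+2$ vertices of the prescribed forest) from every $G_i$, observe that the resulting collection on $n-2$ vertices satisfies $\sigma_2(G_i')\ge (n-2)-2$, and invoke the Ore-type rainbow Hamiltonian \emph{path} theorem of Li, Li, and Li (Theorem~\ref{Li}) as a black box, including its characterization of the two extremal families. In the non-extremal case one gets a rainbow spanning path on $V\setminus\{u,v\}$ with three unused colors, and the only rotation argument needed is the comparatively easy one of attaching $u$ and then $v$ to the ends of an already-built path; the extremal cases $(A2')$ and $(A3')$ translate directly into (B2) and (B3). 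If you want to pursue your from-scratch route, you would in effect be reproving Theorem~\ref{Li} with endpoint constraints on top, which is substantially harder than what the theorem requires given the existing literature.
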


One may check that if either $(B2)$ or $(B3)$ apply for any $u,v \in V$, then $G$ has a rainbow Hamiltonian cycle. As a result we obtain the following.

\begin{cor}\label{hamorhamconn}
    Suppose $G = (G_1, \ldots, G_n)$ is a collection of $n$ graphs with the same vertex set $V$ and $\sigma_2(G_i) \geq n$ for all $i \in [n]$. Then $G$ either has a rainbow Hamiltonian cycle or is rainbow Hamiltonian-connected.
\end{cor}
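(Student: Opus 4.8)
The plan is to obtain the corollary directly from Theorem~\ref{mainthm} by contraposition: I assume that $G$ has no rainbow Hamiltonian cycle and deduce that $G$ is rainbow Hamiltonian-connected. Fix an arbitrary pair $u,v \in V$. By Theorem~\ref{mainthm}, one of $(B1)$, $(B2)$, $(B3)$ holds for this pair. If I can show that whenever $(B2)$ or $(B3)$ holds for some pair the collection $G$ must contain a rainbow Hamiltonian cycle, then under the standing no-cycle assumption only $(B1)$ can occur, and since $u,v$ were arbitrary this says exactly that $G$ admits a rainbow Hamiltonian $u,v$-path between every pair, i.e.\ $G$ is rainbow Hamiltonian-connected.

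Thus the entire content reduces to verifying the parenthetical assertion that $(B2)$ or $(B3)$ forces a rainbow Hamiltonian cycle. The key observation I would exploit is that in each of the two extremal structures there is a \emph{specific} Hamiltonian cycle all of whose edges lie in \emph{every} $G_i$ simultaneously. Once such a cycle is in hand, the rainbow requirement becomes trivial: a Hamiltonian cycle on $n$ vertices has exactly $n$ edges, there are exactly $n$ colors, and every cycle edge belongs to all $n$ color classes, so \emph{any} bijection between the cycle's edges and $[n]$ is automatically a valid rainbow coloring. Hence I only need to exhibit one ``fully available'' Hamiltonian cycle in each case.

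For $(B2)$ (reading $\{x,y\}=\{u,v\}$), I would take the cycle that starts at $u$, traverses all of $X$ along clique edges, passes to $v$, traverses all of $Y$ along clique edges, and returns to $u$:
\[
u,\, x_1, \dots, x_{|X|},\, v,\, y_1, \dots, y_{|Y|},\, u.
\]
Each interior edge lies inside a clique $G_i[X]$ or $G_i[Y]$, hence in every $G_i$, while the four connector edges $ux_1,\, x_{|X|}v,\, vy_1,\, y_{|Y|}u$ are present in every $G_i$ because $u$ and $v$ are adjacent to all of $X\cup Y$; the degenerate cases $|X|=1$ or $|Y|=1$ cause no difficulty. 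For $(B3)$, I would take the alternating cycle $x_1, y_1, x_2, y_2, \dots, x_{n/2}, y_{n/2}, x_1$, every edge of which crosses the partition and therefore lies in every $G_i$ since $G_i[X,Y]$ is complete bipartite.

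I expect no genuine obstacle: all the difficulty is absorbed into Theorem~\ref{mainthm}, and the corollary is a short deduction. The only points requiring care are the routine bookkeeping that the two displayed cycles are honestly Hamiltonian (each vertex is visited exactly once) and the remark that the color assignment is unconstrained precisely because every cycle edge is common to all $n$ graphs.
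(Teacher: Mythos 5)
Your proposal is correct and follows exactly the route the paper intends: the paper's entire justification is the one-line remark that ``one may check that if either $(B2)$ or $(B3)$ apply for any $u,v \in V$, then $G$ has a rainbow Hamiltonian cycle,'' and your explicit cycles (clique-path through $X$, then $v$, then clique-path through $Y$, back to $u$ in case $(B2)$; the alternating bipartite cycle in case $(B3)$), together with the observation that a cycle whose $n$ edges all lie in every $G_i$ admits any bijection to $[n]$ as a rainbow coloring, are precisely the intended verification.
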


In fact, we will prove a stronger version of Theorem~\ref{mainthm} in which we can also fix some rainbow linear forest to embed within our rainbow $u,v$-Hamiltonian path. First we need some definitions.

A {\bf linear forest} is a forest in which every component is a path.

\begin{definition}
Given a linear forest $H$, a pair of vertices $u,v$ is called $H$-compatible if $d_H(u) \leq 1$ and $d_H(v) \leq 1$, and $u$ and $v$ belong to different components of $H$. 
\end{definition}

Note that if $|E(H)| < n-1$, a necessary condition for a Hamiltonian $u,v$-path to contain a linear forest $H$ as a subgraph is for $H$ to be $u,v$-compatible.

Throughout this paper, for simplicity we will assume that the linear forest $H$ contains distinct components $H_u$ and $H_v$ containing $u$ and $v$ respectively as endpoints (these components may be just singleton vertices). Moreover, if $H$ contains other nontrivial components, we write these components as $H_1, H_2, \ldots, H_q$ where each $H_i$, $i \in [q]$, contains at least one edge. 

If $F$ is a rainbow subgraph of $G$, then we use $c(F) = \{c(e): e \in F\}$ to denote the set of colors used in $F$.

\begin{thm}\label{mainthm2}
Fix integers $k,n$ with $k \leq (n-4)/3$. Suppose $G = (G_1, \ldots, G_n)$ is a collection of $n$ graphs with the same vertex set $V$ and $\sigma_2(G_i) \geq n+k$ for all $i \in [n]$. Fix any rainbow linear forest $H$ in $G$ with $k$ edges. Then for any $H$-compatible pair of vertices $u,v \in V$, one of the following statements holds:
\begin{enumerate}
    \item[$(C1)$] There exists a rainbow Hamiltonian $u,v$-path which contains $H$;
    \item[$(C2)$] The linear forest $H$ has exactly two components $H_u$ and $H_v$, and there exists a partition $X\cup Y$ of $V(G) \setminus V(H)$ with $|X|,|Y| \geq 1$ such that for every unused color $i \in [n] \setminus c(H)$, in $G_i$, $X$ and $Y$ form disjoint cliques, there are no edges between $X$ and $Y$, and each vertex in $H$ is adjacent to every vertex in $X \cup Y$. 

    \item[$(C3)$] There is a partition $V = X \cup Y$ with $|X| = (n+k)/2, |Y| = (n-k)/2$, such that $H \subseteq X$, and for every $i \in [n] \setminus c(H)$, $G_i[X,Y]$ is complete bipartite and $G_i[Y]$ is independent. 
\end{enumerate} 
\end{thm}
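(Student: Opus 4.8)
The plan is to argue by contradiction with the extremal method: assuming $(C1)$ fails, I take a \emph{longest} rainbow $u,v$-path containing $H$ and show that the impossibility of inserting the leftover vertices, together with an Ore-type neighbourhood count, forces the rigid extremal configurations of $(C2)$ or $(C3)$. First I reserve the $k$ colors of $c(H)$ and call the remaining $n-k$ colors \emph{free}; along any rainbow $u,v$-path $P \supseteq H$ the $k$ edges of $H$ keep their reserved colors and every other edge receives a distinct free color, so the number of free colors left unused on $P$ equals $n-|E(P)|$. Here the components of $H$ are treated as indivisible segments: $u,v$ are the actual endpoints (by $H$-compatibility), while each internal component $H_1,\dots,H_q$ must appear as a contiguous sub-path. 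A short seed path linking $H_u,H_1,\dots,H_q,H_v$ by free-colored edges exists, since $k\leq (n-4)/3$ together with $\sigma_2(G_i)\geq n+k$ leaves enough free colors and high enough degrees to choose the linking edges greedily. Among all rainbow $u,v$-paths containing $H$, I pick $P=p_0p_1\cdots p_\ell$ ($p_0=u$, $p_\ell=v$) with the maximum number of vertices; if $(C1)$ fails then $W:=V\setminus V(P)\neq\emptyset$, $|V(P)|\leq n-1$, and at least two free colors remain unused on $P$.

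The central local fact is an \emph{insertion obstruction}: for every $w\in W$ and every non-$H$ edge $p_jp_{j+1}$, one cannot choose distinct colors $a\neq b$ — each either unused on $P$ or equal to $c(p_jp_{j+1})$ — with $wp_j\in E(G_a)$ and $wp_{j+1}\in E(G_b)$, for otherwise replacing $p_jp_{j+1}$ by the rainbow path $p_jwp_{j+1}$ yields a strictly longer rainbow $u,v$-path containing $H$. In particular, in each free color the neighbours of $w$ on $P$ contain no two consecutive vertices. To make this obstruction bite globally, I enrich the family of optimal paths using endpoint-preserving rerouting along free-colored chords (a rainbow $2$-switch reversing an interior segment), so that the "no two consecutive free-neighbours" condition must hold for $w$ along \emph{every} path in the rerouting family simultaneously. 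A parallel \emph{crossing} analysis is run at the two ends: a pair of free chords realizing $up_{i+1}\in E(G_a)$ and $vp_i\in E(G_b)$ with $a\neq b$ would reroute $P$ into a rainbow Hamiltonian cycle on $V(P)$ through $H$, and combining such a cycle with a leftover adjacency again re-extracts a longer rainbow $u,v$-path; so these crossings are blocked as well.

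The counting step exploits that the reservoir of unused free colors has size $n-|E(P)|\geq 2$ (larger when $|W|$ is large), so all insertions and crossings can be blocked only if the free-color adjacencies are extremely restricted. For each free color $i$ I apply the Ore bound $d_{G_i}(u)+d_{G_i}(v)\geq n+k$, and $d_{G_i}(w)+d_{G_i}(w')\geq n+k$ for leftover pairs, against the blocked shifted-neighbourhood sets; summing over the $n-k$ free colors forces near-equality for essentially every free color. Tracking where equality is attained produces a single partition valid across all free colors: either $V\setminus V(H)$ splits as $X\cup Y$ with every free-color graph inducing disjoint cliques on $X$ and $Y$, no $X$–$Y$ edges, and $V(H)$ universal (this is $(C2)$, which only survives when $q=0$, i.e.\ $H=H_u\cup H_v$), or $V$ splits as $X\cup Y$ with $H\subseteq X$, every free-color graph complete bipartite between $X,Y$ and independent on $Y$, and the tightness of the degree sum pins $|X|=(n+k)/2$, $|Y|=(n-k)/2$ (this is $(C3)$). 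Both configurations meet the hypothesis with equality, which is consistent with their being the true extremal cases; the $k=0$ specialization recovers Theorem~\ref{mainthm}.

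I expect the main obstacle to be the rainbow rotation–extension with \emph{both} endpoints fixed and with $H$ frozen into the path: every splice, crossing, or $2$-switch deletes an edge (freeing its color) and must be recolored injectively, so the argument must continually certify that the required number of distinct free colors are simultaneously available — this is precisely where $\sigma_2(G_i)\geq n+k$ and $k\leq (n-4)/3$ are consumed, and where a careless count collapses. The second delicate point is the stability step: upgrading "all insertions and crossings are blocked" into one \emph{global} bipartition that is consistent across all $n-k$ free colors at once, and cleanly separating $(C2)$ from $(C3)$, according to whether the neighbourhood deficiency is concentrated strictly inside $V\setminus V(H)$ or is spread across a balanced cut passing through $V(H)$.
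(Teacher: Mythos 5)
Your plan is genuinely different from the paper's: the paper never runs a rotation--extension argument on a longest path. Instead it deletes a $(k+2)$-subset $D\subseteq V(H)$ (containing $u,v$ and all but one endpoint of each internal component), observes that $\sigma_2$ drops by at most $2(k+2)$ so the reduced collection satisfies the hypothesis of Theorem~\ref{Li}, and then absorbs the deleted pieces back into whichever of the three outcomes $(A1')$--$(A3')$ occurs. The extremal structures $(C2)$ and $(C3)$ are thus \emph{inherited} from the known classification rather than reconstructed from scratch. As written, your proposal has gaps that I do not think are cosmetic.

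First, every application of $\sigma_2(G_i)\geq n+k$ requires the pair in question to be \emph{non-adjacent in $G_i$}; you invoke $d_{G_i}(u)+d_{G_i}(v)\geq n+k$ and $d_{G_i}(w)+d_{G_i}(w')\geq n+k$ for leftover pairs without certifying this. If two leftover vertices are adjacent in a free color, the degree bound vanishes and that adjacency cannot be converted into a path extension without an insertion point, which is exactly what you are trying to produce --- the argument is circular at this step. (The paper's insertion claims always pair a leftover vertex $w_t$ with a path endpoint $x_1$ and first \emph{prove} $x_1w_t\notin E(G_a)$ for the relevant colors before counting.) Second, the existence of a ``seed'' rainbow $u,v$-path containing all of $H$ is asserted from ``enough free colors and high enough degrees,'' but $\sigma_2$ gives no minimum-degree guarantee on the specific endpoints $w_1,\dots,w_q$, and in the extremal configurations $(C2)$/$(C3)$ no such path exists at all --- so this step cannot be done greedily and unconditionally; it must already branch into the extremal cases. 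Third, and most seriously, the entire stability step (``summing over the free colors forces near-equality \dots\ tracking where equality is attained produces a single partition valid across all free colors'') is the actual content of the theorem and is not carried out; with both endpoints fixed, the available moves are only insertions and interior $2$-switches, each of which frees one color and consumes up to two, and you acknowledge but do not resolve the bookkeeping that certifies enough distinct free colors survive every splice. Until those three points are made precise, this is a proof strategy rather than a proof, and I would recommend the paper's reduction to Theorem~\ref{Li} as the more tractable route.
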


Setting $k=0$ implies Theorem~\ref{mainthm}.

\subsection{Notation and proof outline}

Let $G = (G_1, G_2, \ldots, G_n)$ be a collection of graphs on the same vertex set $V = V(G)$. We define $N_i(x)=N_{G_i}(x)$ to to be the set of neighbors of $x$ in $G_i$, and $d_i(x)=|N_i(x)|$ be the degree of $x$ in $G_i$ for $i \in [n]$. 

As Theorem~\ref{mainthm2} is a strengthening of Theorem~\ref{mainthm}, we prove only the former. We fix a linear forest $H$ and a pair of vertices $u$, $v$ that are $H$-compatible. We remove a subset of the vertices of $H$ from each $G_i$ and show that the resulting collection of graphs satisfy the conditions of Theorem~\ref{Li}, and thus one of $(A1)$, $(A2)$, or $(A3)$ applies. We consider each of these cases separately and analyze the behavior of the graphs when we add the deleted vertices back in. For example in the case of $(A1)$, we start with a rainbow spanning path on the remaining vertices and show that the deleted portions of the linear forest $H$ can be ``absorbed" back into this path to find a rainbow Hamiltonian $u,v$-path containing $H$. 

For induction purposes, we prove an easy extension of Theorem \ref{Li}. 
\begin{lem}\label{Li2} Let $k \geq 1$ be an integer. Suppose $G = (G_1, \ldots, G_{n+k})$ is a collection of $(n+k)$ graphs where $k\geq 0$ with the same vertex set $V$ where $|V|=n$ and $\sigma_2(G_i) \geq n-2$ for all $i \in [n+k]$. Then one of the following statements holds:
\begin{enumerate}
    \item[$(A1')$] $G$ has a rainbow Hamiltonian path;
    \item[$(A2')$] $G$ consists of $n+k$ identical copies of $K_\ell \cup K_{n-\ell}$ where $\ell \in [n-1]$;
    \item[$(A3')$] $n$ is even and there is a partition $V = X \cup Y$ with $|X| = n/2-1, |Y| = n/2+1,$ and for every $i \in [n+k]$, $G_i[Y]$ is an independent set.
\end{enumerate}
\end{lem}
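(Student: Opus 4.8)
The plan is to reduce directly to Theorem~\ref{Li} by discarding the surplus graphs and then treating them as an extra supply of colors and edges. We may assume $k \geq 1$, since the case $k=0$ is exactly Theorem~\ref{Li}. Apply Theorem~\ref{Li} to the first $n$ graphs $(G_1,\ldots,G_n)$; this is legitimate because each satisfies $\sigma_2(G_i)\geq n-2$. We obtain one of $(A1)$, $(A2)$, $(A3)$ for this subcollection. If $(A1)$ holds, the resulting rainbow Hamiltonian path uses only colors in $[n]$ and immediately witnesses $(A1')$. The content of the lemma therefore lies in cases $(A2)$ and $(A3)$, where the point is to show that each surplus graph $G_j$ with $j \in \{n+1,\ldots,n+k\}$ either supplies the single edge needed to complete a rainbow Hamiltonian path, or is forced by the degree condition to repeat the same rigid structure.

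Consider case $(A2)$, where $G_1 = \cdots = G_n = K_\ell \cup K_{n-\ell}$ for a fixed partition $V = V_1 \cup V_2$ with $|V_1| = \ell$, $|V_2| = n-\ell$. Fix a surplus index $j$. If $G_j$ contains a crossing edge $xy$ with $x \in V_1$, $y \in V_2$, then I would build a Hamiltonian path consisting of a path covering $V_1$ ending at $x$, then $xy$, then a path covering $V_2$ starting at $y$. Since $V_1$ and $V_2$ are cliques in each of $G_1,\ldots,G_n$, the two clique-paths use $(\ell-1)+(n-\ell-1) = n-2$ edges, to which I assign $n-2$ distinct colors from $[n]$, and I color $xy$ with $j$; this gives $(A1')$. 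If instead no surplus graph has a crossing edge, I claim each such $G_j$ equals $K_\ell \cup K_{n-\ell}$: were $G_j$ missing an internal edge $xx'$, say with $x,x' \in V_1$, then for any $b \in V_2$ the crossing non-edge $xb$ would satisfy $d_j(x) + d_j(b) \leq (\ell-2) + (n-\ell-1) = n-3$, contradicting $\sigma_2(G_j) \geq n-2$. Hence all $n+k$ graphs coincide with $K_\ell \cup K_{n-\ell}$, giving $(A2')$.

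Now consider case $(A3)$, where $n$ is even, $V = X \cup Y$ with $|X| = n/2-1$, $|Y| = n/2+1$, and $G_i[Y]$ is independent for all $i \in [n]$. First I would upgrade this using the degree condition: for distinct $y,y' \in Y$ the pair is a non-edge, and since neither has a neighbor inside $Y$ we have $d_i(y), d_i(y') \leq |X| = n/2-1$, so $\sigma_2(G_i) \geq n-2$ forces $d_i(y) = |X|$ for every $y \in Y$; that is, $G_i[X,Y]$ is complete bipartite for all $i \in [n]$. Fix a surplus index $j$. If $G_j[Y]$ contains an edge $y_1 y_2$, I would take the path $y_1, y_2, x_1, y_3, x_2, y_4, \ldots, x_{n/2-1}, y_{n/2+1}$, which alternates between $X$ and $Y$ after the initial pair. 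Every edge other than $y_1 y_2$ joins $X$ and $Y$ and hence lies in all of $G_1,\ldots,G_n$, so I color these $n-2$ edges with distinct colors from $[n]$ and color $y_1 y_2$ with $j$, yielding $(A1')$. If instead $G_j[Y]$ is independent for every surplus $j$, then $G_i[Y]$ is independent for all $i \in [n+k]$ and $(A3')$ holds.

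The only genuinely delicate points are the two degree-counting arguments that force rigidity: that a crossing non-edge forbids a missing internal edge in case $(A2)$, and that independence of $Y$ upgrades to complete bipartiteness of $G_i[X,Y]$ in case $(A3)$. The absorbing constructions themselves are routine, and the color budget is never a constraint, since we only ever need $n-2$ distinct colors from $[n]$ for the ``easy'' edges while reserving the surplus index $j$ for the single connecting edge. I expect the bookkeeping of degenerate parameters (such as $\ell = 1$ or very small $n$, where one of the clique-paths or the alternating tail is empty) to be the only fiddly part, but in each such case the displayed constructions read literally and still produce the desired path.
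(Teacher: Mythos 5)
Your proof is correct, and it handles the surplus graphs by a genuinely different mechanism than the paper. The paper's proof is a two-line self-reduction: assuming $G$ has no rainbow Hamiltonian path, it applies Theorem~\ref{Li} not only to $(G_1,\ldots,G_n)$ but also to each collection $(G_1,\ldots,G_{n-1},G_{n+i})$, and since $(A1)$ is excluded, each surplus graph is forced to inherit the same rigid structure $(A2)$ or $(A3)$ as $G_1$. You instead prove that rigidity by hand: you show that any deviation of a surplus graph $G_j$ from the extremal structure (a crossing edge between the two cliques in case $(A2)$, or an edge inside $Y$ in case $(A3)$) can be combined with $n-2$ ``free'' edges from the original $n$ graphs to build a rainbow Hamiltonian path explicitly, and that absent such a deviation the condition $\sigma_2(G_j)\geq n-2$ pins $G_j$ down exactly. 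Your degree computations (the $(\ell-2)+(n-\ell-1)=n-3$ contradiction, and the upgrade of $(A3)$ to $G_i[X,Y]$ complete bipartite) are correct, and the color budget works since only $n-2$ colors from $[n]$ are needed for the clique/bipartite edges with the surplus index $j$ reserved for the connecting edge. The paper's route is shorter and avoids any construction; yours is more self-contained, makes the extremal structures' fragility explicit, and as a byproduct records the useful fact that in case $(A3)$ every $G_i[X,Y]$ is complete bipartite — a fact the paper rederives later in Case 3 of the main proof.
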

\begin{proof}Suppose that $G$ has no rainbow Hamiltonian path. In particular, for any index set of size $n$, $\{i_1, \ldots, i_n\} \subset [n+k]$, $(G_{i_1}, \ldots, G_{i_{n}})$ does not have such a path.

We apply Theorem~\ref{Li} to $G' = (G_1, \ldots, G_n)$. 
Suppose first that $(A2)$ holds, i.e., $G_1, \ldots, G_n$ are identical copies of $K_\ell \cup K_{n-\ell}$. For $i \in [k]$, the sequence $G''= (G_1, \ldots, G_{n-1}, G_{n+i})$ must also satisfy $(A2)$ by Theorem~\ref{Li}, thus $G_{n+i}$ is identical to $G_1$, and $(A2')$ holds for all of $G= (G_1, \ldots, G_{n+k})$. 

Similarly, if $(A3)$ holds for $G'$, then $(A3')$ must also hold for all of $G$. 
\end{proof}


\section{Proof of Theorem \ref{mainthm2}}
 Fix a rainbow linear forest $H$ (along with its colors) and a pair of vertices $u,v \in V$ that are $H$-compatible. We will show that either there exists a rainbow $u,v$-Hamiltonian path that contains $H$ or either $(C2)$ or $(C3)$ hold. Without loss of generality, assume the colors used in $H$ are $c(H)=\{n-k+1, \ldots, n\}$. 
 
We may assume the connected components of $H$ are $\{H_1, \ldots, H_q\} \cup \{H_u, H_v\}$ where $u \in V(H_u)$, $v \in V(H_v)$, and each of $H_1, \ldots, H_q$ contains at least one edge.
For each $H_i$ such that $i \in [q]$, let $v_i$ and $w_i$ be two end points of $H_i$. If $H_u$ contains at least two vertices, let $w_u$ be the other endpoint of $H_u$ that is not $u$. Otherwise, let $w_u=u$. Similar for $w_v$.

Define $D= V(H) \setminus \{v_1, \ldots, v_q\}$. It is easy to see that $u,v \in D$, $|D|=k+2$, and $|V(H)|=k+2+q$. Let $G'=(G'_1, \ldots, G'_{n-k})$ where each $G'_i$ is obtained from $G_i$ by deleting the vertex set $D$. We have $|V(G')|=n-k-2$. Observe that 
\begin{equation} \label{deg3}
\sigma_2(G'_i) \geq n+k-2(k+2)=(n-k-2)-2=|V(G_i')|-2 \ \mbox{for all} \ i \in [n-k].    
\end{equation} 

By Lemma \ref{Li2}, one of $(A1')$, $(A2')$ or $(A3')$ holds for $G'$.

{\bf Case 1}: $(A1')$ holds for $G'$. That is, $G'$ contains a Hamiltonian rainbow path $P$ of order $n-k-2$, 
using $|V(G')|-1= n-k-3$ distinct colors in $[n-k]$. Let $S$ be the set of unused colors (i.e., $S=[n] \setminus (c(P) \cup c(H))$), where $|S| = 3$. For each $i \in [q]$, since $v_i \notin D$, we have $v_i \in V(P)$ (See Figure ~\ref{fig:1}). 

\begin{figure}[ht!] 
    \centering
    \begin{tikzpicture}
        \coordinate (v1) at (-2,0);
        \coordinate (v2) at (-1,0);
        \coordinate (v3) at (0,0);
        \coordinate (v4) at (4,0);
        \coordinate (v5) at (7,0);
        \coordinate (v6) at (-1,2);
        \coordinate (v7) at (0,2);
         \coordinate (v8) at (4,2);
        \coordinate (v9) at (-1,3);
         \coordinate (v10) at (1,3); 
          \coordinate (v11) at (3,3);
         \coordinate (v12) at (5,3); 
        \draw[fill=black] (v1) circle (3pt);
        \draw[fill=black] (v2) circle (3pt);
        \draw[fill=black] (v3) circle (3pt);
        \draw[fill=black] (v4) circle (3pt);
        \draw[fill=black] (v5) circle (3pt);
         \draw[fill=black] (v6) circle (3pt);
         \draw[fill=black] (v7) circle (3pt);
        \draw[fill=black] (v8) circle (3pt);
        \draw[fill=black] (v9) circle (3pt);
        \draw[fill=black] (v10) circle (3pt);
        \draw[fill=black] (v11) circle (3pt);
        \draw[fill=black] (v12) circle (3pt);
        \draw[fill=black,thick] (v1)--(v2)--(v3)--(v4)--(v5);
        \draw[fill=black,thick] (v2)--(v6);
        \draw[fill=black,thick] (v3)--(v7);
         \draw[fill=black,thick] (v4)--(v8);
          \draw[fill=black,thick] (v9)--(v10);
           \draw[fill=black,thick] (v11)--(v12);
        \node at (7.6,0) {$P$};
         \node at (-1,2.3) {$w_1$};
         \node at (-1,-0.3) {$v_1$};
          \node at (-0.7,1) {$H_1$};
           \node at (0,2.3) {$w_2$};
         \node at (0,-0.3) {$v_2$};
          \node at (0.3,1) {$H_2$};
          \node at (4,2.3) {$w_q$};
         \node at (4,-0.3) {$v_q$};
          \node at (4.3,1) {$H_q$};
           \node at (-1,3.3) {$w_u$};
         \node at (1,3.3) {$u$};
          \node at (0,3.3) {$H_u$};
           \node at (3,3.3) {$v$};
         \node at (5,3.3) {$w_v$};
          \node at (4,3.3) {$H_v$};
          \node at (2,1) {$\dots \dots \dots \dots$};
        \coordinate (dummy) at (6,-1);
        \draw[fill=white,white] (dummy) circle (3pt);
    \end{tikzpicture}
    \caption{The rainbow path $P$ and linear forest $H$.}
    \label{fig:1}
\end{figure}
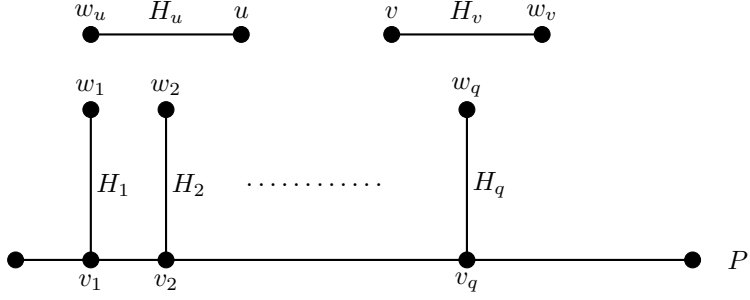

We will first ``absorb" each of $H_1, \ldots, H_q$ into the rainbow path $P$, and then attach $H_u, H_v$ as the ends of the path. We first claim that:
\begin{claim}
There exists a rainbow path $P^*$ with $V(P) \subseteq V(P^*)$ that contains all of $H_1, \ldots, H_q$.  
\end{claim}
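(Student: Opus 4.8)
The plan is to absorb the components $H_1,\dots,H_q$ into $P$ one at a time, maintaining a rainbow path $P^{(t)}$ with vertex set $V(P)\cup\bigcup_{i\le t}V(H_i)$ that contains $H_1,\dots,H_t$ as subgraphs (each with its reserved colors). The key bookkeeping observation is that a single absorption is \emph{color-neutral} on the palette $[n-k]$: to fold in $H_i$ we will delete exactly one current path-edge incident to $v_i$ and add exactly one new ``reconnection'' edge, while the interior edges of $H_i$ contribute only the reserved colors $c(H_i)\subseteq c(H)=\{n-k+1,\dots,n\}$. Since $P$ uses $n-k-3$ of the $n-k$ available colors, this invariant keeps exactly three colors of $[n-k]$ free at every stage; in particular we never exhaust the palette, regardless of how large $q$ is (recall $q\le k\le(n-4)/3$).

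For one absorption step, recall that $v_i$ lies on the current path while $w_i$ and the interior of $H_i$ lie in $D$ and hence off it. If $v_i$ is an endpoint of $P^{(t)}$, I simply append $H_i$, traversing it from $v_i$ to $w_i$; this costs no color of $[n-k]$ and makes $w_i$ the new endpoint. Otherwise I cut $P^{(t)}$ at $v_i$ into two subpaths $A$ and $B$, attach $H_i$ to the end $v_i$ of the subpath containing it to obtain a rainbow path $A'$ ending at $w_i$, and then reduce the problem to \emph{merging} the two rainbow paths $A'$ and $B$ into a single rainbow path on their common vertex set, using only free colors.

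The heart of the matter, and the step I expect to be the main obstacle, is carrying out this merge. The naive hope, that $w_i$ is joined in some free color to a neighbour of $v_i$, is too rigid to read off directly from $\sigma_2(G_i)\ge n+k$. Instead I would run an Ore-type rotation argument across the three free colors: consider the endpoints of $A'$ and of $B$, and seek a free-color edge between them; if none is present, the Ore condition forces the relevant endpoints to have large degree, and a P\'osa-style rotation (which is itself color-neutral, deleting one path-edge and adding one free-color edge) exposes new endpoints until either a joining edge appears or a degree count is violated. The hypothesis $k\le(n-4)/3$ is exactly what provides the slack here: the vertex set carrying the two paths has size within $O(k)$ of $n$, so the effective degrees of its vertices remain close to $n$, comfortably more than enough for the rotation and merge to terminate in a single joined rainbow path.

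Iterating this step over $i=1,\dots,q$ produces the desired rainbow path $P^*$ with $V(P)\subseteq V(P^*)$ that contains all of $H_1,\dots,H_q$, and it leaves three colors of $[n-k]$ unused. This $P^*$ is the object asserted by the claim; it is also precisely the structure to which the two remaining components $H_u$ and $H_v$ will be attached, as the two ends of the eventual Hamiltonian $u,v$-path, in the next step of the argument.
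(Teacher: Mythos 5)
Your overall strategy --- absorb the components $H_1,\dots,H_q$ one at a time, keeping each absorption color-neutral so that exactly three colors of $[n-k]$ remain free throughout --- is the same as the paper's. (The paper phrases it extremally, taking a rainbow path that absorbs the most components subject to conditions (a)--(c) and deriving a contradiction, but that is a cosmetic difference from your iteration.) The problem is that the merge step, which you yourself flag as ``the main obstacle,'' is exactly where the content of the proof lives, and you have left it as a sketch whose proposed mechanism (iterated P\'osa rotations until a joining edge appears) is both unnecessary and, as described, would not go through.

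Three concrete issues. First, the hypothesis only gives $d_a(x)+d_a(y)\ge n+k$ for a \emph{single} color $a$ and a pair non-adjacent in $G_a$, whereas the rotation needs two new edges in two \emph{different} free colors $a_1,a_2$, and hence a lower bound on a mixed sum such as $d_{a_2}(x_1)+d_{a_1}(w_t)$. The paper obtains this by adding the two single-color Ore inequalities for $a_1$ and $a_2$ and noting that one of the two mixed sums must be at least $n+k$; without this step your degree count does not start. Second, every rotation deletes a path edge, and if that edge belongs to an already-absorbed $H_j$ (or to the reserved edge of $H_t$ at $v_t$), your invariant that the current path contains $H_1,\dots,H_t$ is destroyed. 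The paper's counting is engineered to give $|N_{P',a_1}(w_t)\cap N^{-}_{P',a_2}(x_1)|\ge 1+g'$, where $g'$ is the number of already-embedded $H$-edges on the path, precisely so that a crossing position can be chosen outside the forbidden set $T_{P'}$; your proposal never accounts for this. Third, iterated rotations are not needed --- a single crossing suffices --- and iterating them in the rainbow setting creates a bookkeeping problem you do not address: the color freed by a rotation is the color of whichever path edge happened to be deleted, which you do not control, so the set of free colors available for the next rotation changes unpredictably. The single-shot crossing argument avoids all of this.
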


\begin{proof}
A path is called maximal if it contains the most number of components from $H_1, \ldots, H_q$ among all paths $P'$ that satisfies the following properties:
\begin{enumerate}
    \item[(a)] $P'$ is rainbow,
    \item[(b)] $V(P) \subseteq V(P')$, and
    \item[(c)] $|[n] \setminus (c(P') \cup c(H))|=3$.
\end{enumerate}

For a maximal path $P'$, define $i(P')=|\{j \in [q]: H_j \subseteq P'\}|$. Let $P'$ be a maximal path, say with $i(P')=q'$. We can assume that $q'<q$ otherwise we can take $P'$ satisfies the claim. A maximal path always exists since $P$ satisfies (a), (b), and (c). 

Let $P'=x_1 \ldots x_{\ell'}$ and let $\mathcal C_{P'}$ be the collection of $q'$ components of $H$ that are contained in $P'$.  If $q'\geq 1$, without loss of generality assume $\mathcal C_{P'} = \{H_1, \ldots, H_{q'}\}$.
Otherwise if $q'=0$, $\mathcal C_{P'} = \emptyset$ and $\ell'=n-k-2$.

Without loss of generality, assume $c(x_jx_{j+1})=j$ for all $j \in [\ell'-1]$. Let $S_{P'}$ be the new set of unused colors for $P'$ and $H$ (i.e., $S_{P'}=[n] \setminus (c(P') \cup c(H))$). By condition (c), $|S_{P'}|=3$. Since $V(P) \subseteq V(P')$, we have $v_i \in V(P')$ for all $i \in [q]$. Say $v_i=x_{j_i}$. Suppose the components of $\mathcal C_{P'}$ together contain $g'$ edges and thus $g'+q'$ vertices. Since each $H_i$ contains a vertex $v_i \in V(P)$, we have \[V(P')=V(P) \cup \bigcup_{H_i \in \mathcal C_{P'}} (V(H_i) \setminus \{v_i\}).\]
Thus, $\ell'=|P'|=|P|+g'=n-k-2+g'$. Define $D_{P'}=V(G) \setminus V(P')$. We have $|D_{P'}|=k+2-g'$. Now consider $H_t \notin \mathcal C_{P'}$. Recall $w_t$ is the endpoint of $H_t$ that is not on $P'$, and $v_t=x_{j_t}$ is the other endpoint of $H_t$ in $P'$.

If $x_1w_t \in E(G_a)$ for some unused color $a \in S_{P'}$, then \[P''=x_{j_t-1} \ldots x_1 w_t H_t v_t x_{j_t+1}\ldots x_{\ell'}\]
with $c(x_1w_t)=a$ is a rainbow path with $\ell'+|H_t|-1$ vertices containing all of $\mathcal C_{P'} \cup \{H_t\}$ . We observe that $S_{P''}=(S_{P'} \cup \{j_t-1\}) \setminus \{a\}$ since the edge $x_{j_t-1}x_{j_t}$ is not used in $P''$. Thus $|S_{P''}|=3$. This contradicts the choice of $P'$ as a maximum path. Therefore, $x_1w_t \notin E(G_a)$ for all $a \in S_{P'}$. By a symmetric argument, we also have $x_{\ell'}w_t \notin E(G_a)$ for all $a \in S_{P'}$.

Since $x_1w_t \notin E(G_a)$ for all $a \in S_{P'}$, then for any two colors say $a_1,a_2 \in S_{P'}$, we have 
\begin{align*}
d_{a_1}(x_1)+d_{a_1}(w_t) &\geq \sigma_2(G_{a_1}) \geq  n+k,\\ 
d_{a_2}(x_1)+d_{a_2}(w_t) &\geq \sigma_2(G_{a_2}) \geq n+k.
\end{align*}
This implies either $d_{a_1}(x_1)+d_{a_2}(w_t) \geq n+k$ or $d_{a_2}(x_1)+d_{a_1}(w_t) \geq n+k$. Without loss of generality, assume $d_{a_2}(x_1)+d_{a_1}(w_t) \geq n+k$. Define 
\[N_{P',a_1}(w_t)=\{x_i \in N_{a_1}(w_t) \cap V(P')\},\]
\[N_{P',a_2}(x_1)=\{x_i \in N_{a_2}(x_1) \cap V(P')\},\]
\[N^-_{P',a_2}(x_1)=\{x_i \in V(P'): x_{i+1} \in N_{P',a_2}(x_1)\}.\]
Since $w_t \in D_{P'}$, we have \[|N_{P',a_1}(w_t)| \geq d_{a_1}(w_t)-(|D_{P'}|-1)=d_{a_1}(w_t)-k-1+g'.\] Since $x_1w_t \notin E(G_{a_2})$, we have \[|N^-_{P',a_2}(x_1)|=|N_{P',a_2}(x_1)| \geq  d_{a_2}(x_1)-(|D_{P'}|-1)=d_{a_2}(x_1)-k-1+g'.\] 
Observe that $N^-_{P',a_2}(x_1) \subseteq \{x_1, \ldots, x_{\ell'-1}\}$. Since $x_1w_t, x_{\ell'}w_t \notin E(G_{a_1})$, we have $N_{P',a_1}(w_t) \subseteq \{x_2, \ldots, x_{\ell'-1}\}$. Thus, we have $N^-_{P',a_2}(x_1) \cup N_{P',a_1}(w_t) \subseteq \{x_1, \ldots, x_{\ell'-1}\}$. Then
\begin{align*}
 |N_{P',a_1}(w_t)|+|N^-_{P',a_2}(x_1)| &\geq d_{a_1}(x_1)+d_{a_2}(w_t)-2k-2+2g'\\
 &\geq n-k-2+2g'\\
 &=\ell'+g'\\
 &\geq |N_{P',a_1}(w_t) \cup N^-_{P',a_2}(x_1)|+1+g'.
\end{align*}
Thus, $|N_{P',a_1}(w_t) \cap N^-_{P',a_2}(x_1)| \geq 1+g'$. Observe that $N_{P',a_1}(w_t) \cap N^-_{P',a_2}(x_1) \subseteq \{x_2, \ldots, x_{\ell'-1}\}$.

Define the set \[T_{P'}=\{x_s \in V(P'): x_sx_{s+1} \in \{E(H_i): H_i \in \mathcal C_{P'}\}\}.\] By definition $|T_{P'}|=g'$. Then $|(N_{P',a_1}(w_t) \cap N^-_{P',a_2}(x_1)) \setminus T_{P'}|\geq 1$. Hence, there exists an $i$ such that $x_i \in N_{P',a_1}(w_t)$, $x_{i+1} \in N_{P',a_2}(x_1)$, and $x_ix_{i+1} \notin E(H_1) \cup \dotsm \cup E(H_{q'})$. We consider the following cases:

If $i \leq j_t-2$, define the rainbow path $P''=x_{j_t-1} \ldots x_{i+1} x_1 \ldots x_i w_t H_t v_t x_{j_t+1} \ldots x_{\ell'}$
with $c(w_tx_i)=a_1$ and $c(x_1x_{i+1})=a_2$. (See Figure~\ref{fig:2}.)
If $i=j_t-1$, let $P''=x_1 \ldots x_i w_t H_t x_{t+1} \ldots x_{\ell'}$
with $c(w_tx_i)=a_1$.
If $i=j_t$, let $P''=w_t H_t x_i \ldots x_1 x_{i+1} \ldots x_{\ell'}$
 with $c(x_1x_{i+1})=a_2$.
Finally, if $i \geq j_t+1$, let $P''=x_{j_t+1} \ldots x_i w_t H_t v_t \ldots x_1 x_{i+1} \ldots x_{\ell'}$
 with $c(w_tx_i)=a_1$ and $c(x_1x_{i+1})=a_2$.

In all four cases, $P''$ is a rainbow path that contains the components $\mathcal C_{P'} \cup \{H_t\}$, and $|S_{P''}| = 3$, contradicting the choice of $P'$.
\end{proof}

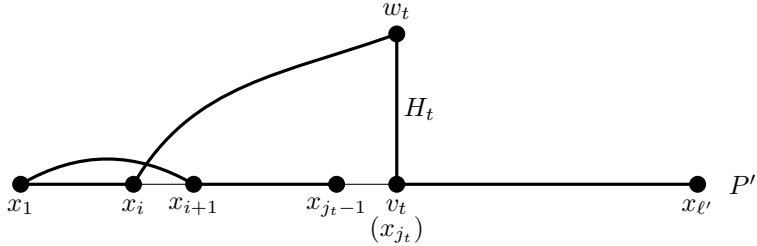
\begin{figure}[ht!]
    \centering
    \begin{tikzpicture}
        \coordinate (v1) at (-2,0);
        \coordinate (v2) at (7,0);
       \coordinate (v3) at (3,0);
       \coordinate (v4) at (3,2);
       \coordinate (v5) at (2.2,0);
       \coordinate (v6) at (-0.5,0);
       \coordinate (v7) at (0.3,0);

        \draw[fill=black] (v1) circle (3pt);
        \draw[fill=black] (v2) circle (3pt);
        \draw[fill=black] (v3) circle (3pt);
        \draw[fill=black] (v4) circle (3pt);
         \draw[fill=black] (v5) circle (3pt);
          \draw[fill=black] (v6) circle (3pt);
           \draw[fill=black] (v7) circle (3pt);
        \draw[fill=black, thin] (v1)--(v2);
        \draw[fill=black, thin] (v3)--(v4);
        \draw[very thick] (v1) to[out=30,in=150] (v7);
        \draw[very thick] (v6) to[out=60,in=-160] (v4);
        \draw[very thick] (v1)--(v6);
        \draw[very thick] (v7)--(v5);
        \draw[very thick] (v3)--(v4);
        \draw[very thick] (v3)--(v2);
        \node at (7.6,0) {$P'$};
         \node at (-2,-0.3) {$x_1$};
         \node at (7,-0.3) {$x_{\ell'}$};
          \node at (3,-0.3) {$v_t$};
           \node at (3,-0.6) {($x_{j_t}$)};
          \node at (3,2.3) {$w_t$};
          \node at (3.3,1) {$H_t$};
          \node at (2.2,-0.3) {$x_{j_t-1}$};
          \node at (-0.5,-0.3) {$x_i$};
           \node at (0.3,-0.3) {$x_{i+1}$};
        \coordinate (dummy) at (6,-1);
        \draw[fill=white,white] (dummy) circle (3pt);
    \end{tikzpicture}
    \caption{An example of the path $P''$.}
    \label{fig:2}
\end{figure}    

Let $P^*=y_1\ldots y_{\ell_1}$ be a rainbow path with $V(P)\subseteq V(P^*)$ given by the previous Claim that contains all of $H_1, \ldots, H_q$ and $|[n] \setminus (c(H) \cup c(P^*)|=3$. Without loss of generality, assume $c(y_iy_{i+1})=i$ for $i \in [\ell_1-1]$. Suppose $H_1, \ldots, H_q$ contains $g_1$ edges and thus $g_1+q$ vertices. Since for each $i \in [q]$, $H_i$ contains a vertex $v_i \in V(P)$, we have $\ell_1=|P^*|=|P|+g_1=n-k-2+g_1$. Define $D_{P^*}=V(G) \setminus V(P^*)$. We have $|D_{P^*}|=k+2-g_1$. Let $S_{P^*}$ be the set of unused colors of $H \cup P^*$. Now we attach $H_u$ onto $P^*$ using a similar proof as the previous Claim.

\begin{claim}
There exists a rainbow path $P_0$ containing $V(P^*)$ and the component $H_u$ such that $u$ is an endpoint of $P_0$ and $S_{P_0} = |[n] \setminus (c(P_0) \cup c(H))| = 2$. 
\end{claim}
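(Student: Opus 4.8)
The plan is to imitate the absorption argument of the previous Claim, but now to attach the component $H_u$ at an \emph{end} of $P^*$ (rather than to insert an interior component), spending exactly one of the three spare colors so that two remain. Write $P^* = y_1 \ldots y_{\ell_1}$ with $c(y_jy_{j+1}) = j$, and recall $|S_{P^*}| = 3$, $\ell_1 = n-k-2+g_1$, and $|D_{P^*}| = k+2-g_1$, where $D_{P^*} = V(G)\setminus V(P^*) = V(H_u)\cup V(H_v)$ contains $w_u$. The target path has the shape $P_0 = u\,[H_u]\,w_u\,\ldots$; that is, I traverse $H_u$ from $u$ to its far endpoint $w_u$ and then splice the (possibly rotated) path $P^*$ onto $w_u$, which automatically makes $u$ an endpoint of $P_0$.

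First I would dispose of the easy case: if $w_u y_1 \in E(G_a)$ for some spare color $a \in S_{P^*}$ (or symmetrically $w_uy_{\ell_1} \in E(G_a)$), then $P_0 = u\,[H_u]\,w_u\,y_1\,y_2\cdots y_{\ell_1}$ is rainbow, contains $V(P^*)$ and $H_u$, has $u$ as an endpoint, and uses only the single new color $a$, so $|S_{P_0}| = 2$. Hence I may assume $w_uy_1, w_uy_{\ell_1} \notin E(G_a)$ for every $a \in S_{P^*}$. Pick two spare colors $a_1, a_2 \in S_{P^*}$. Since $w_uy_1$ is a non-edge in both $G_{a_1}$ and $G_{a_2}$, the hypothesis $\sigma_2 \geq n+k$ gives $d_{a_1}(w_u)+d_{a_1}(y_1) \geq n+k$ and $d_{a_2}(w_u)+d_{a_2}(y_1)\geq n+k$, so after possibly swapping $a_1,a_2$ I may assume $d_{a_1}(w_u)+d_{a_2}(y_1)\geq n+k$. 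I would then run the same rotation as before: find an index $i$ with $y_i \in N_{a_1}(w_u)$ and $y_{i+1} \in N_{a_2}(y_1)$ (equivalently, $y_i$ lies in both $N_{a_1}(w_u)\cap V(P^*)$ and the shifted set $N^-_{a_2}(y_1)$ of on-path predecessors of $N_{a_2}(y_1)$), and set
\[P_0 = u\,[H_u]\,w_u\,y_i\,y_{i-1}\cdots y_1\,y_{i+1}\,y_{i+2}\cdots y_{\ell_1},\]
coloring $w_uy_i$ by $a_1$ and $y_1y_{i+1}$ by $a_2$. This reverses the segment $y_1\ldots y_i$ and deletes only the edge $y_iy_{i+1}$ of color $i$, so two new colors are spent and color $i$ is freed, giving $|S_{P_0}| = 2$, provided the deleted edge $y_iy_{i+1}$ is not an edge of any absorbed component.

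The crux is therefore the same counting as in the previous Claim. Because $w_u \in D_{P^*}$ and $w_uy_1 \notin E(G_{a_1})\cup E(G_{a_2})$, the two endpoint non-adjacencies force $N_{a_1}(w_u)\cap V(P^*)$ and $N^-_{a_2}(y_1)$ into $\{y_1,\ldots,y_{\ell_1-1}\}$ and supply the extra slack $|N_{a_1}(w_u)\cap V(P^*)| \geq d_{a_1}(w_u)-k-1+g_1$ and $|N^-_{a_2}(y_1)| \geq d_{a_2}(y_1)-k-1+g_1$; summing these and using $d_{a_1}(w_u)+d_{a_2}(y_1)\geq n+k$ together with $\ell_1 = n-k-2+g_1$ shows the two sets overlap in at least $g_1+1$ vertices of $\{y_2,\ldots,y_{\ell_1-1}\}$. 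Since the left-endpoints of the $g_1$ absorbed-component edges form a set $T_{P^*}$ of size $g_1$, at least one valid index $i$ survives with $y_iy_{i+1}\notin E(H_1)\cup\cdots\cup E(H_q)$, so the rotation tears apart none of $H_1,\ldots,H_q$. I expect this overlap-versus-$T_{P^*}$ bookkeeping to be the only genuine obstacle, and it is precisely here that the strengthened bound $n+k$ (rather than $n$) is needed to absorb the $2g_1$ error terms. Attaching at an endpoint in fact makes the case analysis strictly simpler than before, since there is no interior vertex $v_t$ and hence no sub-cases on the position of $i$; the only degenerate value $i=1$ is already ruled out because $w_uy_1$ is a non-edge of $G_{a_1}$.
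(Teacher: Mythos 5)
Your proposal is correct and follows essentially the same approach as the paper: the endpoint-attachment easy case, the degree-sum swap to get $d_{a_1}(w_u)+d_{a_2}(y_1)\geq n+k$, the shifted-neighborhood overlap count giving an intersection of size at least $g_1+1$, and the exclusion of the $g_1$ absorbed-component edges via $T_{P^*}$ before rotating. The only cosmetic difference is that you note explicitly that endpoint attachment removes the sub-case analysis on the position of $i$, which the paper leaves implicit.
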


\begin{proof}
Suppose such path does not exist. If $y_1w_u \in E(G_a)$ for some $a \in S_{P^*}$, then \[P_0=u H_u w_u y_1 \ldots y_{\ell_1}\] satisfies the claim. Thus, $y_1w_u \notin E(G_a)$ for all $a \in S_{P^*}$. By a symmetric argument, we also have $y_{\ell_1}w_u \notin E(G_a)$ for all $a \in S_{P^*}$. Then for any two colors say $a_1,a_2 \in S_{P^*}$ we have 
\begin{align*}
d_{a_1}(y_1)+d_{a_1}(w_u) &\geq \sigma_2(G_{a_1}) \geq  n+k,\\ 
d_{a_2}(y_1)+d_{a_2}(w_u) &\geq \sigma_2(G_{a_2}) \geq n+k.
\end{align*}
This implies either $d_{a_1}(y_1)+d_{a_2}(w_u) \geq n+k$ or $d_{a_2}(y_1)+d_{a_1}(w_u) \geq n+k$. Without loss of generality, assume $d_{a_2}(y_1)+d_{a_1}(w_u) \geq n+k$. Define 
\[N_{P^*,a_1}(w_u)=\{y_j \in N_{a_1}(w_u) \cap V(P^*)\},\]
\[N_{P^*,a_2}(y_1)=\{y_j \in N_{a_2}(y_1) \cap V(P^*)\},\]
\[N^-_{P^*,a_2}(y_1)=\{y_j \in V(P^*): y_{j+1} \in N_{P^*,a_2}(y_1)\}.\]
Since $w_u \in D_{P^*}$, we have \[|N_{P^*,a_1}(w_u)| \geq d_{a_1}(w_u)-(|D_{P^*}|-1)=d_{a_1}(w_u)-k-1+g_1,\] and since $y_1w_u \notin E(G_{a_2})$, we have \[|N^-_{P^*,a_2}(y_1)|=|N_{P^*,a_2}(y_1)| \geq  d_{a_2}(y_1)-(|D_{P^*}|-1)=d_{a_2}(y_1)-k-1+g_1.\] 

We have $N^-_{P^*,a_2}(y_1) \cup N_{P^*,a_1}(w_u) \subseteq \{y_1, \ldots, y_{\ell_1-1}\}$, and so
\begin{align*}
 |N_{P^*,a_1}(w_u)|+|N^-_{P^*,a_2}(y_1)| &\geq d_{a_1}(y_1)+d_{a_2}(w_u)-2k-2+2g_1\\
 &\geq n-k-2+2g_1\\
 &= \ell_1+g_1\\
 &\geq |N_{P^*,a_1}(w_u) \cup N^-_{P^*,a_2}(y_1)|+1+g_1.
\end{align*}
Thus, $|N_{P^*,a_1}(w_u) \cap N^-_{P^*,a_2}(y_1)| \geq 1+g_1$. Observe that $N_{P^*,a_1}(w_u) \cap N^-_{P^*,a_2}(y_1) \subseteq \{y_2, \ldots, y_{\ell_1-1}\}$. Define the set \[T_{P^*}=\{y_s \in V(P^*): y_sy_{s+1} \in E(H_1) \cup \dotsm \cup E(H_{q})\}.\] By definition, $|T_{P^*}|=g_1$. Then $|N_{P^*,a_1}(w_u) \cap N^-_{P^*,a_2}(y_1) \setminus T_{P^*}|\geq 1$. Hence, there exists an $i$ such that $y_i \in  N_{P^*,a_1}(w_u) $, $y_{i+1} \in N_{P^*,a_2}(y_1)$, and $y_iy_{i+1} \notin E(H_1) \cup \dotsm \cup E(H_{q})$. Then we have \[P_0=u H_u w_u y_i \ldots y_1 y_{i+1} \ldots y_{\ell_1}\] with $c(w_uy_i) = a_1, c(y_1y_{i+1}) = a_2$ is a rainbow path with $\ell_1+|H_u|$ vertices containing all of $H_1, \ldots, H_q, H_u$, and $u$ is an end point of $P_0$. Moreover, we observe that $S_{P_0}=(S_{P^*} \cup \{i\}) \setminus \{a_1, a_2\}$ so $|S_{P_0}|=2$.
\end{proof}

Let $P_0=z_1\ldots z_{\ell_2}$ be the rainbow path obtained from the previous Claim, where $u=z_{\ell_2}$. Without loss of generality, assume $c(z_iz_{i+1})=i$ for $i \in [\ell_2-1]$. Suppose $H_1, \ldots, H_q, H_u$ contains $g_2$ edges and thus $g_2+q+1$ vertices. Since each $H_i$ contains a vertex $v_i \in V(P)$ for $i \in [q]$ and $H_u$ contains no vertex on $P$, we have $\ell_2=|P_0|=n-k-2+g_2+1=n-k-1+g_2$. Define $D_{P_0}=V(G) \setminus V(P_0)$. We have $|D_{P_0}|=k+1-g_2$. Now we attach $H_v$ to the other end of $P_0$. The resulting path will be a rainbow Hamiltonian $u,v$-path containing $H$, thus showing that $(C1)$ in Theorem~\ref{mainthm2} holds. The remaining proof is very similar to that of the previous Claim so we omit some details.

If $z_1w_v \in E(G_a)$ for some $a \in S_{P_0}$, then \[P_{1}=v H_v w_v z_1 \ldots z_{\ell_2}\] with $c(z_1w_v)=a$ satisfies $(C1)$. So assume $z_1w_v \notin E(G_a)$ for all $a \in S_{P_0}$. Then for the two colors say $a_1,a_2 \in S_{P_0}$ we have 
\begin{align*}
d_{a_1}(z_1)+d_{a_1}(w_v) &\geq \sigma_2(G_{a_1}) \geq  n+k,\\ 
d_{a_2}(z_1)+d_{a_2}(w_v) &\geq \sigma_2(G_{a_2}) \geq n+k.
\end{align*}
Assume $d_{a_2}(z_1)+d_{a_1}(w_v) \geq n+k$, and define 
\[N_{P_0,a_1}(w_v)=\{z_j \in N_{a_1}(w_v) \cap V(P_0)\},\]
\[N_{P_0,a_2}(z_1)=\{z_j \in N_{a_2}(z_1) \cap V(P_0)\},\]
\[N^-_{P_0,a_2}(z_1)=\{z_j \in V(P_0): z_{j+1} \in N_{P_0,a_2}(z_1)\}.\]
Since $w_v \in D_{P_0}$, we have \[|N_{P_0,a_1}(w_v)| \geq d_{a_1}(w_v)-(|D_{P_0}|-1)=d_{a_1}(w_u)-k+g_2.\] 
Since $z_1w_v \notin E(G_{a_2})$, we have \[|N^-_{P_0,a_2}(z_1)|=|N_{P_0,a_2}(z_1)| \geq  d_{a_2}(z_1)-(|D_{P_0}|-1)=d_{a_2}(y_1)-k+g_2.\] 

Since $N^-_{P_0,a_2}(z_1) \subseteq \{z_1, \ldots, z_{\ell_2-1}\}$ and $N_{P_0,a_1}(w_v) \subseteq \{z_2, \ldots, z_{\ell_2}\}$, we have $N^-_{P_0,a_2}(z_1) \cup N_{P_0,a_1}(w_v) \subseteq \{z_1, \ldots, z_{\ell_2}\}$, and so 
\begin{align*}
 |N_{P_0,a_1}(w_v)|+|N^-_{P_0,a_2}(z_1)| &\geq d_{a_1}(z_1)+d_{a_2}(w_v)-2k+2g_2\\
 &\geq n-k+2g_2\\
&= \ell_2+1+g_2\\
 &\geq |N^-_{P_0,a_2}(z_1)\cup N_{P_0,a_1}(w_v)|+1+g_2.
\end{align*}
Thus, $|N^-_{P_0,a_2}(z_1)\cap N_{P_0,a_1}(w_v)| \geq 1+g_2$. Observe that $N^-_{P_0,a_2}(z_1)\cap N_{P_0,a_1}(w_v) \subseteq \{z_2, \ldots, z_{\ell_2-1}\}$. Define the set \[T_{P_0}=\{z_s \in V(P_0): z_sz_{s+1} \in E(H_1) \cup \dotsm \cup E(H_{q}) \cup E(H_u)\}.\] By definition, $|T_{P_0}|=g_2$. Then $|N^-_{P_0,a_2}(z_1)\cup N_{P_0,a_1}(w_v) \setminus T_{P_0}|\geq 1$ and there exists an $i$ such that $z_i \in  N_{P_0,a_1}(w_v) $, $z_{i+1} \in N_{P_0,a_2}(z_1)$, and $z_iz_{i+1} \notin E(H_1) \cup \dotsm \cup E(H_{q}) \cup E(H_u)$. The rainbow path \[P_{1}=v H_v w_v z_i \ldots z_1 z_{i+1} \ldots z_{\ell_2}\] with $c(w_vz_i) = a_1, c(z_1z_{i+1}) = a_2$ satisfies $(C1)$ of Theorem~\ref{mainthm2}.

\bigskip

{\bf Case 2}: $(A2')$ holds for $G'$. That is, 
$G'$ contains $n-k$ identical copies of $K_\ell \cup K_{n-k-2-\ell}$ where $\ell \in [n-k-3]$. Set $X=V(K_\ell)=\{x_1, \ldots, x_{\ell}\}$ and $Y=V(K_{n-k-2-\ell})=\{x_{\ell+1}, \ldots, x_{n-k-2}\}$.

For any $i \in [n-k]$, consider a pair of non-adjacent vertices $x$ and $y$ in $G'_i$ in which $x \in X$ and $y \in Y$. We have \[d_{G'_i}(x) + d_{G'_i}(y)=(\ell-1)+(n-k-2-\ell-1)=n-k-4\] for all $i \in [n-k]$. Since $\sigma_2(G_i) \geq n+k$ and $|D|=k+2$, each vertex in $D$ must be adjacent to both $x$ and $y$. Moreover, $i$, $x$, and $y$ were chosen arbitrarily so $D$ is adjacent to all vertices in each $G'_i$ for all $i \in [n-k]$. Then for all $i \in [n-k]$, $G_i$ contains the subgraph $(K_\ell \cup K_{n-k-2-\ell})\vee D$  (on vertex sets $X,Y,$ and $D$). If $H = H_u \cup H_v$, then $G$ satisfies $(C2)$. Otherwise recall  each component $H_i$ in $H - H_u - H_v$ has a distinct vertex $v_i \in V(G')$. Say $v_i = x_{j_i}$, and let $W = \{ x_{j_1}, \ldots, x_{j_q}\}$. 

Since $|V(G')| = n-k-2 \geq k+1 \geq q+1$, either $X$ or $Y$ contains a vertex outside of $W$, say it is $Y$. Let $P_1$ be any spanning rainbow path of $X$ with colors in $[n-k]$ that ends with a vertex in $W$, say $x_{i_1}$, (recall that $X$ and $Y$ are cliques in all unused colors), and let $P_2$ be any spanning rainbow path of $Y$ with colors in $[n-k] - c(P_1)$ that does not end in a vertex of $W$. 

We obtain a new rainbow path $P_3$ by attaching $H_1$ to the end of $P_1$ (at $x_{j_1}$) and then adding an edge in any unused color between the other end of $H_1$ and the beginning of $P_2$. This is possible because vertices in $D$ are adjacent to all vertices of $V(G')$ in $G_i$, $i \in[n-k]$. We similarly insert the components $H_2, \ldots, H_q$ into $P_3$ by including each $H_i$ after $x_{j_i}$. We obtain a new rainbow path $P_4$ containing $H_1 \cup \ldots \cup H_q$. Then we can attach $H_u$ and $H_v$ to the beginning and end of $P_4$ respectively to obtain a path satisfying $(C2)$ (see Figure~\ref{fig:a2'}).

\begin{figure}
  \centering \includegraphics[width=0.6\linewidth]{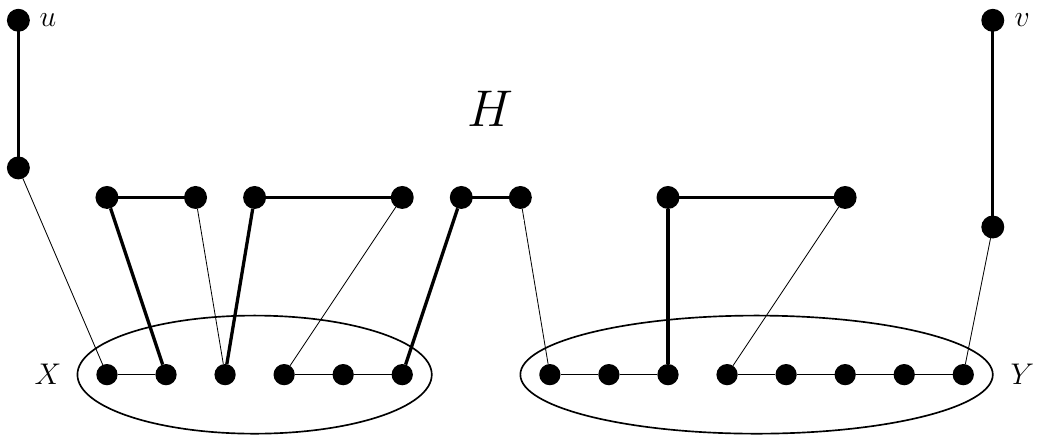}
  \caption{A path in Case 2.}
  \label{fig:a2'}
\end{figure}


{\bf Case 3}: $(A3')$ holds for $G'$. We have $n-k-2$ is even and there is a partition $V(G')=X' \cup Y'$ with $|X'|=(n-k-2)/2-1 = (n-k)/2 - 2$, $|Y'|=(n-k-2)/2+1 = (n-k)/2$, and for every $i \in [n-k]$, $G_i[Y']$ is an independent set. For any $i \in [n-k]$, consider a pair of non-adjacent vertices $x$ and $y$ in $G'_i$ in which $x, y \in Y'$. We have $\sigma_2(G'_i) \leq d_{G'_i}(x) + d_{G'_i}(y) \leq 2((n-k-2)/2-1)=n-k-4$ for all $i \in [n-k]$. Since $\sigma_2(G_i) \geq n+k$ and $|D|=k+2$, each vertex in $D$ must be adjacent to both $x$ and $y$. Moreover, $i$, $x$, and $y$ were chosen arbitrarily so $D$ is adjacent to all vertices in $Y'$ in each $G'_i$ for all $i \in [n-k]$. Set $X=X' \cup D$ and $Y=Y'$. We have $|X|=|X'|+|D|=(n+k)/2$ and $|Y|=|Y'|=(n-k)/2$. It is easy to see $G_i[X,Y]$ is complete bipartite for every $i \in [n-k]$. 

If $H \subseteq X$ then we satisfy (C3) in Theorem~\ref{mainthm2}. So suppose $H$ intersects $Y$. In particular, at least one vertex in $H-D = \{v_1, \ldots, v_q\}$ belongs to $Y$. 

Observe that
\begin{equation}\label{Gidegree}
\mbox{$\sigma_2(G_i[X]) \geq 2k$ for $i \in [n-k]$.}
\end{equation}
Indeed, for any two non-adjacent vertices $x,y \in X$, we have 
\begin{align*}
d_{G_i[X]}(x)+d_{G_i[X]}(y)&=d_{G_i}(x)+d_{G_i}(y)-d_{G_i \cap Y}(x)-d_{G_i \cap Y}(y)\\
&\geq n+k-(n-k)/2-(n-k)/2=2k. 
\end{align*}

In order to prove that there exists a rainbow $u,v$-Hamiltonian path containing $H$, we show that in fact we can find such a path covering a larger linear forest $H'$ which itself contains $H$. First we construct $H'$.

\begin{claim}
    There exists a rainbow linear forest $H'$ such that
    \begin{enumerate}
        \item[(a)] $u,v$ are $H'$-compatible,
        \item[(b)] $H \subseteq H'$, 
        \item[(c)] every edge in $E(H') \setminus E(H)$ contains a vertex in $X'$ the other vertex in $X' \cup \{w_1, \ldots, w_q, w_u, w_v\}$, and
        \item[(d)] exactly $q-1$ edges in $H'$ intersect $X'$. (Some of these edges may be in $H$.)
    \end{enumerate}
\end{claim}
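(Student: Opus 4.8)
The plan is to read condition $(d)$ as a statement purely about \emph{within-$X$} edges. Write $p = |\{i \in [q] : v_i \in X'\}|$ and $r = q - p = |\{i \in [q] : v_i \in Y\}|$; the hypothesis that $H$ meets $Y$ gives $r \geq 1$. Since $D \cap X' = \emptyset$ and the remaining $v_i$ lie in $Y$, the only vertices of $H$ in $X'$ are the $p$ vertices $v_i \in X'$, so exactly $p$ edges of $H$ meet $X'$. Counting edges of $H$: there are $q$ edges incident to some $v_i$ (one per component), of which $p$ lie inside $X$ and $r$ run from $Y$ to $D$; the other $k-q$ edges lie inside $D \subseteq X$. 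Hence $H$ has $(k-q)+p$ within-$X$ edges. Because every type-$(c)$ edge meets $X'$ and has both ends in $X' \cup \{w_1,\dots,w_q,w_u,w_v\} \subseteq X$, a linear forest $H'\supseteq H$ obeying $(c)$ has exactly $q-1$ edges meeting $X'$ if and only if it has exactly $(k-q)+(q-1)=k-1$ within-$X$ edges. Thus $(d)$ is equivalent to requiring that $H'$ contain precisely $k-1$ within-$X$ edges, i.e.\ that we add exactly $r-1$ new type-$(c)$ edges to $H$. This count is no accident: $|X|-(|Y|+1)=k-1$ is exactly the number of within-$X$ edges of a Hamiltonian $u,v$-path in this near-bipartite structure, so $H'$ is being built as the skeleton of that path, and $(c)$ is designed precisely to keep the $v_i$-side endpoint of each component free (so each $v_i \in Y$ can be reattached later) and to keep $u,v$ as outer endpoints.

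If $r=1$, then $p=q-1$ and $H'=H$ already satisfies $(a)$--$(d)$. Otherwise I would insert the $r-1$ edges one at a time, maintaining the invariant that the current forest $H^*$ is rainbow, contains $H$, keeps $u$ and $v$ of degree $\leq 1$ in distinct components, and uses only type-$(c)$ edges. Each new edge joins a currently available endpoint in $X'$ (a free $X'$-vertex, or a degree-$\leq 1$ endpoint of $H^*$ lying in $X' \cup \{w_1,\dots,w_q,w_u,w_v\}$) to another available admissible vertex of $X' \cup \{w_1,\dots,w_q,w_u,w_v\}$, in a color not yet used by $H^*$, chosen so that no cycle forms and no vertex reaches degree $3$. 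The bookkeeping that there is room for this uses $k \leq (n-4)/3$: it gives $|X'| = (n-k)/2-2$ vertices (far more than the $r-1<k$ endpoints we need) together with $n-k$ unused colors, again comfortably more than $r-1$.

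The heart of the argument is showing that, as long as $H^*$ still has fewer than $k-1$ within-$X$ edges, an admissible edge can always be found. Here I would reuse the rotation argument from Case~1, but now carried out inside the graphs $G_i[X]$ with the threshold $\sigma_2(G_i[X])\geq 2k$ of~\eqref{Gidegree} replacing $n+k$. Concretely, let $s,t \in X'$ be the endpoints of the path of $H^*$ currently being extended. If neither can be joined to an admissible vertex in any unused color, then for a suitable pair of unused colors $a_1,a_2$ these endpoints are nonadjacent in both $G_{a_1}[X]$ and $G_{a_2}[X]$; combining the two instances of $\sigma_2(G_i[X])\geq 2k$ and counting neighbors along the path (after discarding the forbidden vertices) produces an interior rotation edge yielding a fresh endpoint with an admissible neighbor, the usual contradiction.

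The step I expect to be the main obstacle is exactly this existence-of-an-edge lemma. Unlike Case~1, the effective ``deleted'' set is no longer just $D$: it also includes the vertices of the component being extended (to forbid cycles), the $k-q$ non-admissible vertices of $D \setminus \{w_1,\dots,w_q,w_u,w_v\}$, and any interior vertices created by earlier insertions. The delicate point is to schedule the insertions — for instance, always growing a single path so that the number of newly created degree-$2$ vertices stays below $q$ — so that the total forbidden set remains safely under the degree threshold $2k$; once that is arranged, $\sigma_2(G_i[X]) \geq 2k$ forces the needed edge to exist in some unused color. Checking that such a schedule is simultaneously compatible with the rainbow constraint and with keeping $u,v$ compatible, across all $r-1$ steps at once, is the part that will require the most care.
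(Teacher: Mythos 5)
The counting at the start of your proposal is sound and matches the paper's bookkeeping: since every edge of $H$ meeting $X'$ is the unique edge at some $v_i\in X'$, condition $(d)$ amounts to adding exactly $q-1-p=r-1$ new edges, each meeting $X'$. The gap is exactly where you predict it: the ``existence-of-an-edge lemma'' cannot be proved by transplanting the Case~1 rotation argument into $G_i[X]$. That rotation argument works only because the Ore-type sum ($n+k$) exceeds the order of the host path minus the small deleted set; here the available bound is $\sigma_2(G_i[X])\geq 2k$ while $|X|=(n+k)/2$, and the hypothesis $k\leq (n-4)/3$ forces $2k<(n+k)/2$ --- indeed $2k$ can be a vanishing fraction of $|X|$ (take $k$ bounded and $n$ large). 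Two nonadjacent endpoints then have neighborhoods whose sizes sum to far less than the path length, so no intersection, and hence no rotation edge, is forced. Concretely, $\sigma_2(G_i[X])\geq 2k$ is consistent with $G_i[X]$ being a disjoint union of cliques of size about $k+1$, where no rotation along a path ever escapes a clique. So the step you flag as ``the main obstacle'' is not a matter of careful scheduling; the quantitative engine behind it is absent.

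The paper closes this gap with a different mechanism. It first takes a \emph{maximal} rainbow linear forest $F$ inside $X'$ extending the matching of $H$-edges that meet $X'$ (using unused colors). If $|E(F)|\geq q-1$ it is done; otherwise maximality is what converts the weak bound \eqref{Gidegree} into useful information: for two ``usable'' vertices $z_a,z_b$ of $F$ in distinct components, maximality makes them nonadjacent in every unused color, so one of them has degree $\geq k$ in some $G_i[X]$, and again by maximality almost none of those neighbors can lie in $X'$ --- they are pushed into $D$, giving at least $q-t$ neighbors among $\{w_1,\ldots,w_q,w_u,w_v\}$ (a ``robust'' vertex). Iterating produces $q-t-1$ vertices robust in distinct unused colors, and a greedy rainbow matching from these to the $w$'s supplies the remaining edges of $H'$. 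In short, the missing edges are not found by rotations along a path at all, but by showing that vertices of $X'$ which cannot be matched within $X'$ must have their degree concentrated on $D$. If you want to salvage your sequential-insertion framework, you would need to build this maximality-plus-degree-relocation idea into it; without it the induction step has no way to produce the next edge.
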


\begin{proof}
Recall that $v_1, \ldots, v_q$ are the end vertices of the paths $H_1, \ldots, H_q$ respectively that were not included in the set $D$ (and so $v_i \in V(G')$). For each $i$ such that $v_i \in X'$, the component $H_i$ contains exactly one edge incident to $X'$. Let the (possibly empty) union of these edges be the matching $M$.

We extend $M$ to a rainbow linear forest by adding to it edges contained within $X'$ with colors in $[n-k]$. Let $F$ be a maximal such rainbow linear forest. If $|E(F)| \geq q-1$, then $H' = F \cup H$ satisfies the claim. So suppose $|E(F)| = t \leq q-2$. 

We may consider $F$ to be a forest covering $X'$, where some vertices may be isolated in $F$. Call a vertex $x$ {\em usable} if $x \in X'$ and $d_{F}(x) \leq 1$. Call a vertex $x \in X'$ {\em robust in color $i$} if $x$ is usable and $x$ has at least $q-t$ neighbors in $\{w_1, \ldots, w_q,w_u, w_v\}$ in $G_i$. Let $S_{H \cup F}=[n] \setminus c(H \cup F)$ be the set of unused colors for $H \cup F$.
We will show the following.

\begin{equation}\label{robust}
    \mbox{There exists $q-t-1$ vertices that are each robust in a distinct color from $S_{H \cup F}$.}
\end{equation}

Suppose first that $z \in X'$ is a usable vertex satisfying $d_{G_i[X]}(z) \geq k$ for some color $i \in S_{H \cup F}$. We will show that $z$ is robust in color $i$.  If $z$ has a usable neighbor $z'$ in $G_i$ in a different component of $F$, then we can extend $F$ to a bigger rainbow linear forest by adding $zz'$ in color $i$, contradicting the maximality of $F$. Thus all $X'$-neighbors of $z$ in $G_i$ are either unusable or in the same component of $F$ as $z$.

Suppose $P$ is a component of $F$ and $|E(P)| = r\geq 1$. If $P$ contains no edges from the matching $M$, then it has $r+1$ vertices in $X'$, of which $r+1-2=r-1$ are unusable. If $P$ contains exactly one edge from $M$, then it has $r$ vertices in $X'$, and at least one endpoint of $P$ is usable. Thus there are also at most $r-1$ unusable $X'$ vertices in $P$. Similarly, if $P$ contains two edges of $M$, then it has $r+1-2 = r-1$ vertices in $X'$, each of which may be unusable. Summing up over all components of $F$, we have at most $t-1 + 1$ vertices that are either unusable or in the same component of $F$ as $z$ (namely the other endpoint of the component). Thus $z$ has at least $d_{G_i[X]}(z) - t \geq k-t$ neighbors in $G_i$ outside of $X'\cup Y'$.

Since $H - (X' \cup Y')$ contains $k+2$ vertices including the $q+2$ vertices in $\{w_1, \ldots, w_q, w_u, w_v\}$, the number of vertices in $\{w_1, \ldots, w_q, w_u, w_v\}$ that are adjacent to $z$ in $G_i$ is at least
\[(k-t) - [(k+2) - (q+2)] = q - t .\] 

To prove~\eqref{robust} it remains to prove that there is a set of distinct vertices $\{z_1, \ldots, z_{q-1-t}\}$ and a set of distinct unused colors of $S_{H \cup F}$, say $\{1, \ldots, q-1-t\}$ such that $d_{G_i[X]}(z_i) \geq k$. As before, if a component of $F$ contains at most one edge from $M$, then it has at least one usable endpoint. Let $m$ denote the number of components of $F$ with two edges in $M$. Then $F[X']$ contains at most $t - 2m$ edges, and hence at least $|X'| - (t-2m)$ components (some of them may be singletons), of which at least $|X'| - (t-2m) -m \geq (n-k)/2 -2 -t \geq q - t$ contain usable vertices (since $(n-k)/2 - 2 \geq k \geq q$).

Let $Z=\{z_1, \ldots, z_{q-t}\}$ be a set of usable vertices from distinct components of $F$ and suppose the colors $\{1, \ldots, q-t-1\}$ belong to $S_{H \cup F}$. By the maximality of $F$, $Z$ is an independent set in every $G_i$ for $i \in [q-t-1]$.  Choose any pair $\{z_a, z_b\} \subset Z$ and any $i \in [q-t-1]$. Equation~\eqref{Gidegree} implies either $d_{G_i[X]}(z_a) \geq k$ or $d_{G_i[X]}(z_b) \geq k$. Thus one of these vertices is robust in color $i$. We may remove this robust vertex from $Z$ and remove the color $i$. This process can be performed $q-t-1$ times to find $q-t-1$ robust vertices, each corresponding to a distinct color. This proves~\eqref{robust}. 

If there exists a rainbow matching $M_0$ between $Z$ and $\{w_1, \ldots, w_q,w_u,w_v\}$ such that $|M_0|=q-t-1$ and $H \cup F \cup M_0$ is a rainbow linear forest, then we are done by taking $H'=H \cup F \cup M_0$. Let $M_0$ be a maximum rainbow matching between $Z$ and $\{w_1, \ldots, w_q,w_u,w_v\}$ such that 
\begin{enumerate}
    \item[(1)] The linear forest $H \cup F \cup M_0$ is rainbow.
    \item[(2)] For each $z \in Z \cap M_0$, the edge $e \in E(M_0)$ adjacent to $z$ is in the color that $z$ is robust in.
\end{enumerate}

 Then we have $|M_0|=t_1 \leq q-t-2$. Thus there exists a vertex say $z_i \in Z$ such that $z_i \notin M_0$. Assume $z_i$ is robust in color $j$. We observe that $z_i$ is contained in a common component of $H \cup F \cup M_0$ with at most one vertex in $\{w_1, \ldots, w_q, w_u, w_v\}$. Call this vertex $f(z_i)$, if it exists. Since $z_i$ has $q-t$ neighbors in $\{w_1, \ldots, w_q,w_u,w_v\}$ in $G_j$, it has $q-t-1-t_1\geq 1$ neighbors in $\{w_1, \ldots, w_q,w_u,w_v\}-f(z_i)-V(M_0)$ in $G_j$. Pick a vertex say $w_s \in \{w_1, \ldots, w_q,w_u,w_v\}-f(z_i)-V(M_0)$. Set $M'_0=M_0 \cup \{z_i w_s\}$ where $c(z_i w_s)=j$. We have $|M'_0|>|M_0|$ but $M'_0$ satisfies conditions (1) and (2) above. A contradiction to $M_0$ being a maximum matching with $|M_0| \leq q-t-2$. Hence, we must have $|M_0|=q-t-1$. The resulting rainbow linear forest $H'=H \cup F \cup M_0$ has $t+ (q-1-t) = q-1$ edges incident to $X'$ and satisfies (a) - (d).
\end{proof}

For each $i \in [n-k]$, let $G^0_i$ be the graph obtained from $G_i$ by merging each connected components of $H' \cap X$ into one vertex. Let $G^0=(G^0_1, \ldots, G^0_{n-k})$ and $V(G^0)=X^0 \cup Y^0$ where $X^0$ is the resulting $X$ after merging and $Y^0=Y$. We observe that $|X^0|=|X'|+q+2-(q-1)=(n-k)/2+1$ since exactly $q-1$ edges in $H'$ intersect $X'$. Let $C$ be a connected component of $H' \cap X$ and suppose $V(C)$ is merged into a vertex $s$, then $s$ is adjacent to a vertex $y \in Y^0$ in $G^0_i$ if all vertices of $C$ are adjacent to $y$ in $G_i$. Since $G_i[X,Y]$ is complete bipartite, $G^0_i[X^0, Y^0]$ is complete bipartite all $i \in [n-k]$.

If $H'$ contains an edge whose has an end point in $Y$, then that edge must belong to $H$. Since each connected component of $H$ has at most one vertex that is not in $D$, it has at most one edge that has an end point in $Y$ so the union of such edges forms a matching $M^0$ in $G_i[X^0, Y^0]$. It is easy to see for any two vertices in $u^0,v^0 \in X^0$, there exists a rainbow Hamiltonian $u^0,v^0$-path which contains the matching $M^0$, and uses colors that were not used by $H'$ (except on $M^0$). Uncontracting the the vertices of $H'$, we obtain a rainbow Hamiltonian $u,v$-path that contains $H$. \qed

\section{Concluding remarks}

\begin{enumerate}
\item In Theorem~\ref{mainthm2}, the bound $k \leq (n-4)/3$ on the number of edges in the linear forest $H$ is likely not best possible. We make no attempt to optimize it in this paper. Perhaps a better option would be a bound in terms of the specific linear forest $H$. For instance if $H$ is a matching on $k$ edges, then clearly $k \leq n/2$ is necessary. However it should be possible to embed other types of linear forests on more than $n/2$ edges.   
    \item In graphs, Hamiltonian-connectedness implies Hamiltonicity. This same implication is not apparent for rainbow Hamiltonian-connectedness. Suppose $G = (G_1, \ldots, G_n)$ is a collection of $n$ graphs with the same vertex set $V$ where $|V|=n$ and $\sigma_2(G_i) \geq n$ for all $i \in [n]$.
By Corollary~\ref{hamorhamconn}, if $G$ does not contain a rainbow Hamiltonian cycle, then for every $u,v \in V(G)$, there is a $u,v$-Hamiltonian path using $n-1$ colors. If the edge $uv$ is available in the remaining color, we could complete the path to a rainbow Hamiltonian cycle. 
In particular, this holds if there exists an edge $uv$ that belongs to all $G_i$, $i \in [n]$.

We say a vertex $u$ is {\em small in color $i$} if $d_i(u) < n/2$. Since $\sigma_2(G_i) \geq n$, the small vertices in $G_i$ form a clique. Thus if there exists a pair of vertices that are small in every $G_i$, then $G$ contains a rainbow Hamiltonian cycle.
Does the same hold true if there is only one vertex that is small in every $G_i$?

\item We consider a weaker version of Question~\ref{rainbowham}:

\begin{quest}Does there exist a global constant $k$ such that the following holds: for sufficiently large $n$, if $G = (G_1, \ldots, G_n)$ is a collection of $n$ graphs with the same vertex set $V$ where $|V|=n$ and $\sigma_2(G_i) \geq n+k$ for all $i \in [n]$, then $G$ contains a rainbow Hamiltonian cycle?
\end{quest}
With such a condition on $\sigma_2$, one may apply Theorem~\ref{mainthm2} and predetermine a linear forest (including a choice of colors for the forest) to be used in some rainbow $u,v$-Hamiltonian path. We cannot control the unused color of the path, however we may use $H$ to limit its options.
\end{enumerate}

{\bf Acknowledgement.} The second author thanks Debsoumya Chakraborti for helpful discussions on this topic.

\end{document}